\numberwithin{equation}{section}
\newtheorem{theorem}{Theorem}[section]
\newtheorem{lemma}[theorem]{Lemma}
\newtheorem{exa}[theorem]{Example}
\newtheorem{corollary}[theorem]{Corollary}
\newtheorem{proposition}[theorem]{Proposition}
\newtheorem{definition}[theorem]{Definition}
\newtheorem{remark}[theorem]{Remark}
\newenvironment{proof}{{\bf Proof.}}{}
\title{Contact join-semilattices}
\author{Tatyana Ivanova}
\affil{Institute of Mathematics and Informatics\authorcr
		Bulgarian Academy of Sciences\authorcr
		e-mail: tatyana.ivanova@math.bas.bg}
\date{}
\begin{document}

\maketitle

\begin{abstract}
Contact algebra is one of the main tools in region-based theory of space. In \cite{dmvw1, dmvw2,iv,i1} it is generalized by dropping the operation Boolean complement. Furthermore we can generalize contact algebra by dropping also the operation meet. Thus we obtain structures, called contact join-semilattices (CJS) and structures, called distributive contact join-semilattices (DCJS). We obtain a set-theoretical representation theorem for CJS and a relational representation theorem for DCJS. As corollaries we get also topological representation theorems. We prove that the universal theory of CJS and of DCJS is the same and is decidable.
\end{abstract}

\section{Introduction}

In classical Euclidean geometry the notion of point is taken as one of the basic primitive notions. In contrast, region-based theory of space (RBTS) has
as primitives the more realistic notion of \textit{region} (abstraction of physical body) together with some
basic relations and operations on regions. Some of these relations are mereological - part-of,
overlap and its dual underlap. Other relations are topological - contact, nontangential part-of, dual
contact and some others definable by means of the contact and part-of relations. This is one of the
reasons that the extension of mereology with these new relations is commonly called
mereotopology. There is no clear difference in literature between RBTS and mereotopology. The
origin of RBTS goes back to Whitehead and de Laguna (\cite{W,deLaguna}). According to Whitehead points, as well as
the other primitive notions in Euclidean geometry such as lines and planes, do not have separate
existence in reality and because of this are not appropriate for primitive notions.
Some papers on RBTS are \cite{V, BD, HG, Pratt-Hartmann2, gerla, V1, ag, gp, tarski, gp1} (also the handbook \cite{A} and \cite{BTV}, containing some logics of
space).

RBTS has applications in computer science because of its simpler way of representing of
qualitative spatial information. Mereotopology is used in the field of Artificial Intelligence, called
Knowledge Representation (KR). RBTS initiated a special field in KR, called Qualitative Spatial
Representation and Reasoning (QSRR) which is appropriate for automatization \cite{B, rn}. RBTS is
applied in geographic information systems, robot navigation. Surveys concerning various
applications are for example \cite{CohnHazarika, CR} and the book \cite{Hazarika} (also special issues of Fundamenta
Informaticae \cite{Du} and the Journal of Applied Nonclassical Logics \cite{Balbiani}). 
One of the most popular systems in Qualitative Spatial Representation and Reasoning is the
Region Connection Calculus (RCC) \cite{RCC}.

The notion of \textit{contact algebra} is one of the main tools in RBTS. This notion appears in the
literature under different names and formulations as an extension of Boolean algebra with some
mereotopological relations \cite{deVries, Stell, vdb, VDDB, BD, dw, dv, DuV}. The simplest system, called just a contact
algebra was introduced in \cite{dv} as an extension of Boolean algebra
$B = (B, 0, 1, \cdot, +, \ast)$ with a binary relation C called \textit{contact} and satisfying five simple axioms:

\noindent
(C1) If $aCb$, then $a \neq 0$,\\
(C2) If $aCb$ and $a\leq c$ and $b\leq d$, then $cCd$,\\
(C3) If $aC(b+c)$, then $aCb$ or $aCc$,\\
(C4) If $aCb$, then $bCa$,\\
(C5) If $a\neq 0$, then $aCa$.

The elements of the Boolean algebra are called regions and are considered as analogs of physical
bodies. Boolean operations are considered as operations for constructing new regions from given
ones. The unit element 1 symbolizes the region containing as its parts all regions, and the zero
element 0 symbolizes the empty region.

The so called \textit{extended contact algebras} (\cite{i,BI}) extend the language of contact algebras by the predicate \textit{covering} which gives the possibility to be defined the predicate \textit{internal connectedness}.

Sometimes there is a problem in the motivation of the operation Boolean complement ($\ast$) of
contact algebra. A question arises - if $a$ represents some region, what region does $a^{\ast}$ represent - it
depends on the universe in which we consider $a$. Moreover if $a$ represents a physical body, then
$a^{\ast}$ is unnatural - such a physical body does not exist. Because of this we can drop the operation of
complement and replace the Boolean part of a contact algebra with distributive lattice. First steps
in this direction were made in \cite{dmvw1, dmvw2}, introducing the notion of distributive contact lattice. In a
distributive contact lattice the only mereotopological relation is the contact relation. Non-tangential inclusion and dual contact (otherwise definable by contact and $\ast$) are not included in the
language. In \cite{iv,i1} the language of distributive contact lattices is extended by considering these two
relations as nondefinable primitives. The well known RCC-8 system of mereotopological relations is definable in this more expressive language and is not definable in the language of distributive contact lattices. 

Furthermore we can generalize contact algebra by dropping also the operation meet. When the elements of a lattice represent physical bodies, the Boolean operation meet ($\cdot$)
gives the closure of the interior of the intersection of two bodies (which in this case coincides with
the intersection of the bodies). In some sense this is an unnatural body and it is reasonable not to consider it. In this paper we eliminate the operation meet from the language of distributive contact
lattices. First we consider \textit{contact join-semilattices (CJS)} and obtain a set-theoretical representation theorem and as a corollary - a topological representation theorem. We define also \textit{distributive contact join-semilattices (DCJS)} and prove that every DCJS is also a CJS. The converse is not true. We obtain also a relational representation theorem for DCJS and as a corollary - a topological one. Finally we define a quantifier-free logic which is decidable.

\section{Preliminaries}

Further we will consider relational and topological contact algebras.

Let $(W,R)$ be a relational system, where $W$ is a nonempty set and $R$ is a reflexive and symmetric binary relation in $W$ and let $B$ be a family of subsets of $W$ closed under union, intersection and complement, containing $\emptyset$ and $W$. We consider the structure $\underline{B}=(B,\leq,0,1,\cdot,+,\ast,C_R)$, where the interpretations of the constants, functional and predicate symbols are the following: $0 = \emptyset$; $1 = W$; $a\leq b$ iff $a \subseteq b$; $a\cdot b = a\cap b$; $a + b = a\cup b$; $a^{\ast} = W\setminus a$; $aC_Rb$ iff $\exists x\in a$ and $\exists y\in b$ such that $xRy$. The obtained structure $\underline{B}$ is called \textbf{relational contact algebra over} $\mathbf{(W,R)}$ \cite{V}.

Topological spaces are among the first mathematical models of space, applied in practice.
Standard models of contact algebras are topological. Let $X$ be a topological space and $a$ be its
subset. We say that $a$ is \textit{regular closed} if $a$ is the closure of its interior. It is a well known fact
that the set $RC(X)$ of all regular closed subsets of $X$ is a Boolean algebra with respect to the
following definitions: $a\leq b$ iff $a\subseteq b$, 0 is the empty set, 1 is the set X, $a+b=a\cup b$, $a\cdot b=Cl\,Int\,(a\cap b)$,
$a^{\ast}=Cl(X \setminus a)$. If we define a contact by $aCb$ iff $a\cap b$ is nonempty, then we obtain a contact
algebra related to $X$, namely $\underline{RC(X)}=(RC(X), \leq , 0, 1, \cdot, +, \ast, C)$ (\cite{dv}, Example 2.1). It is called \textbf{the topological contact algebra over} $\mathbf{X}$.

In the paper we consider also structures which extend by contact relation the language of the \textbf{join-semilattices}, which are defined in the following way:

\begin{definition}\cite{g} 
	\textbf{Join-semilattice} with $0$ is a structure $\underline{L}=(L,\leq,0,+)$ such that are true the axioms
	
	\begin{IEEEeqnarray*}{lll}
		(1)\ x\leq x; & &\\
		(2)\ x\leq y\wedge y\leq x\rightarrow x=y; & &\\
		(3)\ x\leq y\leq z\rightarrow x\leq z; & &\\
		(4)\ x+y=y+x; & &\\
		(5)\ x\leq x+y; & &\\
		(6)\ x,y\leq z\rightarrow x+y\leq z; & &\\
		(7)\ 0\leq x. & &
	\end{IEEEeqnarray*}
	
\end{definition}

\begin{definition}\cite{g} 
	\textbf{Distributive join-semilattice} with $0$ is a join-semilattice with $0$ $\underline{L}=(L,\leq,0,+)$ such that is true the axiom \[(ad)\ x\leq a+b\rightarrow (\exists a'\leq a)(\exists b'\leq b)(x=a'+b').\]
\end{definition}

\begin{definition}\label{def2n}\cite{g} (page 80)
	A nonvoid subset $I$ of a join-semilattice $\underline{L}$ is an \textbf{ideal} iff for $a,b\in L$, we have $a+b\in I$ iff $a$ and $b\in I$.
\end{definition}

\begin{definition}\label{def4n}\cite{g} (page 100)
	A subset $F$ of a join-semilattice $\underline{L}$ is called a \textbf{dual ideal} iff $a\in F$ and $a\leq x$ imply that $x\in F$, and $a,b\in F$ implies that there exists a lower bound $d$ of $\{a,b\}$ such that $d\in F$. 
\end{definition}

\begin{definition}\label{def5n}\cite{g} (page 100)
	An ideal $I$ of a join-semilattice $\underline{L}$ is \textbf{prime} iff $I\not=L$ and $L\setminus I$ is a dual ideal.
\end{definition}

\begin{lemma}\label{lemma1n}\cite{g} (page 100)
	Let $I$ be an ideal and let $F$ be a nonvoid dual ideal of a distributive join-semilattice $\underline{L}$. If $I\cap F=\emptyset$, then exists a prime ideal $P$ of $\underline{L}$ with $I\subseteq P$ and $P\cap F=\emptyset$.
\end{lemma}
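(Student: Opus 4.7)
The plan is to apply Zorn's lemma to the family $\mathcal{P}$ of all ideals $J$ of $\underline{L}$ with $I\subseteq J$ and $J\cap F=\emptyset$, ordered by inclusion. This family is nonempty since $I\in\mathcal{P}$, and given a chain in $\mathcal{P}$ its union is again an ideal (the ``iff'' form of Definition \ref{def2n} passes to unions of chains because finitely many elements always lie in a common member of the chain) that remains disjoint from $F$. Let $P$ be a maximal element of $\mathcal{P}$; I will show that $L\setminus P$ is a dual ideal, so that $P$ is prime by Definition \ref{def5n}.

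Upward closure of $L\setminus P$ is immediate: if $a\notin P$ and $a\leq x$, then $a+x=x$; were $x\in P$, the ideal condition would force $a\in P$. The work is in verifying the meet-like condition of Definition \ref{def4n}: given $a,b\notin P$, I must produce a common lower bound $d\in L\setminus P$. Let $P_a$ (respectively $P_b$) denote the ideal generated by $P\cup\{a\}$ (respectively $P\cup\{b\}$); since $P$ is join-closed and contains $0$, this ideal consists of all $x\leq p+a$ with $p\in P$ (analogously for $b$). By maximality of $P$ both $P_a$ and $P_b$ meet $F$, so choose $f_a\in P_a\cap F$ and $f_b\in P_b\cap F$, together with $p_1,p_2\in P$ such that $f_a\leq p_1+a$ and $f_b\leq p_2+b$. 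Since $F$ is a dual ideal there exists $f\in F$ with $f\leq f_a$ and $f\leq f_b$.

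Now I apply the distributivity axiom (ad) twice. From $f\leq p_1+a$ I obtain $f=p_1'+a'$ with $p_1'\leq p_1$ and $a'\leq a$; in particular $a'\leq f\leq f_b\leq p_2+b$, so a second application of (ad) gives $a'=p_2'+b'$ with $p_2'\leq p_2$ and $b'\leq b$. Then $b'\leq a'\leq a$ and $b'\leq b$, so $b'$ is a lower bound of $\{a,b\}$. If $b'$ were in $P$, then because $p_2'\leq p_2\in P$ and $P$ is downward closed we would have $p_2'\in P$ and hence $a'=p_2'+b'\in P$; likewise $p_1'\in P$, giving $f=p_1'+a'\in P$ and contradicting $f\in F$ and $P\cap F=\emptyset$. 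Thus $b'\in L\setminus P$ is the required lower bound.

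The main obstacle I foresee is precisely this last step: since we have no genuine meet operation, the only way to manufacture a common lower bound of $a$ and $b$ is to chain two applications of (ad) — first breaking $f$ into an $a$-piece, then breaking that $a$-piece into a $b$-piece — and one must track carefully that the resulting element lies outside $P$, which is forced by the disjointness of $P$ and $F$.
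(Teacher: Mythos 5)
The paper offers no proof of this lemma at all --- it is quoted from Gr\"atzer \cite{g} --- so there is nothing internal to compare against; your argument is correct and is essentially the standard textbook proof (Zorn's lemma on ideals containing $I$ and disjoint from $F$, then two chained applications of (ad) to manufacture a common lower bound of $a,b$ outside the maximal $P$). The only point left implicit is that $P\neq L$, as required by Definition~\ref{def5n}; this is immediate since $F$ is nonvoid and $P\cap F=\emptyset$. All the delicate steps (the generated ideal $P_a=\{x:\ x\leq p+a,\ p\in P\}$ really is an ideal meeting $F$ by maximality, and $b'\in P$ would force $f=p_1'+p_2'+b'\in P$, contradicting $f\in F$) check out.
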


\section{Adding contact relation}

We consider additionally the following axioms

\begin{IEEEeqnarray*}{lll}
	(8)\ x\leq 1; & &\\
	(9)\ xCy\rightarrow x\neq 0; & &\\
	(10)\ xCy\rightarrow yCx; & &\\
	(11)\ xC(y+z)\rightarrow xCy\text{ or }xCz; & &\\
	(12)\ xCy,\ y\leq y'\rightarrow xCy'; & &\\
	(13)\ x\neq 0\rightarrow xCx; & &\\
	(14)\text{ for any }m,i\geq 1, & &\\
	A^1_{m,i}:\ xCy,\ x\leq s_1,\ldots,s_m,\ y\leq t_1,\ldots,t_m,\ s_1=s^1_1+\ldots +s^i_1,\ldots,s_m=s^1_m+\ldots +s^i_m, & &\\
	t_1=t^1_1+\ldots +t^i_1,\ldots,t_m=t^1_m+\ldots +t^i_m\rightarrow & &\\ \bigvee_{\substack{l_1=1,\ldots,i \\ \ldots \\ l_m=1,\ldots,i \\ k_1=1,\ldots,i \\ \ldots \\ k_m=1,\ldots,i}}\Big(\bigwedge_{1\leq j\leq u\leq m}s_j^{l_j}Cs_u^{l_u}\wedge\bigwedge_{1\leq j\leq u\leq m}t_j^{k_j}Ct_u^{k_u}\wedge\bigwedge_{\substack{j=1,\ldots,m \\ u=1,\ldots,m}}s_j^{l_j}Ct_u^{k_u}\Big); & &\\
	\\
	(15)\text{ for any }n,i\geq 1, & &\\
	A_{n,i}:\ t\not\leq u,\ t\leq x_1,\ldots,x_n,\ x_1=x^1_1+\ldots +x^i_1,\ldots,x_n=x^1_n+\ldots +x^i_n \rightarrow & &\\
	\bigvee_{\substack{j_1=1,\ldots,i \\ \ldots \\ j_n=1,\ldots,i}}\Big(x^{j_1}_1,\ldots,x^{j_n}_n\not\leq u\wedge\bigwedge_{\substack{k=1,\ldots,n\\l=1,\ldots,n}} x^{j_k}_kCx^{j_l}_l\Big); & &
\end{IEEEeqnarray*}

\begin{definition}
	\textbf{Contact join-semilattice (CJS for short)} is a structure $\underline{B}=(B,\leq,0,1,+,C)$ such that are true the axioms (1),\ldots,(10); (14) and (15).
\end{definition}

\begin{remark}
	
	\begin{itemize}
		
		\item The axiom $A^1_{m,i}$ says that if $a$ is in contact with $b$, $a\leq s_1,\ldots,s_m$, $b\leq t_1,\ldots,t_m$ and $s_1,\ldots,s_m$, $t_1,\ldots,t_m$ are presented as finite joins, then one element can be chosen of every join in such a way that every two chosen elements are in contact;
		
		\item The axiom $A_{n,i}$ says that if $t\not\leq u$, $t\leq a_1,\ldots,a_n$ and $a_1,\ldots,a_n$ are presented as finite joins, then one element can be chosen of every join in such a way that every chosen element is not $\leq u$ and every two chosen elements are in contact.
		
		\item The axiom $A^1_{1,1}$ is $xCy$, $x\leq s_1$, $y\leq t_1$, $s_1=s^1_1$, $t_1=t^1_1\rightarrow s^1_1Ct^1_1$ and obviously $A^1_{1,1}$ is equivalent to the axiom (C2) of contact algebra.
		
		\item The axiom $A^1_{1,2}$ is $xCy$, $x\leq s_1$, $y\leq t_1$, $s_1=s^1_1+s^2_1$, $t_1=t^1_1+t^2_1\rightarrow s^1_1Ct^1_1\vee s^1_1Ct^2_1\vee s^2_1Ct^1_1\vee s^2_1Ct^2_1$. By it we easily obtain that in every CJS is true axiom (11).
		
		\item The axiom $A_{1,1}$ is $t\not\leq u$, $t\leq x_1$, $x_1=x^1_1\rightarrow x^1_1\not\leq u$ and $x^1_1Cx^1_1$. By it, taking $u=0$, we easily obtain that in every CJS is true axiom (13).
		
	\end{itemize}
\end{remark}

\begin{definition}\label{def0}
	\textbf{Distributive contact join-semilattice (DCJS for short)} is a structure $\underline{B}=(B,\leq,0,1,+,C)$ such that are true the axioms (1),\ldots,(13) and the axiom (ad).
\end{definition}

We will prove that every DCJS is also a CJS. Let $\underline{B}$ be a DCJS. We will prove that in $\underline{B}$ are true axioms (14) and (15). For the purpose first we will prove two lemmas.

\begin{lemma}\label{lemma3}
	In $\underline{B}$ is true the formula \[(d_n)\ x\leq a_1+\ldots+a_n\rightarrow(\exists a_1'\leq a_1)\ldots(\exists a_n'\leq a_n)(x=a_1'+\ldots+a_n'),\] where $n\geq 2$.
\end{lemma}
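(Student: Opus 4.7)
The plan is to prove $(d_n)$ by induction on $n \geq 2$, using $(ad)$ as both the base case and the driver of the induction step, with $(d_n)$ serving as the inductive hypothesis in the step.

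For the base case $n = 2$, the statement $(d_2)$ is literally the axiom $(ad)$ from Definition of distributive join-semilattice, so nothing needs to be done.

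For the induction step, assume $(d_n)$ holds and suppose $x \leq a_1 + \ldots + a_n + a_{n+1}$. I would set $b := a_1 + \ldots + a_n$, so that $x \leq b + a_{n+1}$. Applying $(ad)$ to this inequality yields $b' \leq b$ and $a'_{n+1} \leq a_{n+1}$ with $x = b' + a'_{n+1}$. Since $b' \leq b = a_1 + \ldots + a_n$, the inductive hypothesis $(d_n)$ produces $a'_1 \leq a_1, \ldots, a'_n \leq a_n$ with $b' = a'_1 + \ldots + a'_n$. Substituting gives $x = a'_1 + \ldots + a'_n + a'_{n+1}$, which is exactly the conclusion of $(d_{n+1})$.

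There is no real obstacle here: the argument is a routine induction, relying only on the definition of the iterated join (so that $a_1 + \ldots + a_n + a_{n+1}$ can be regrouped as $(a_1 + \ldots + a_n) + a_{n+1}$, which follows from associativity and commutativity of $+$ derivable from axioms (4), (5), (6)) and on $(ad)$ itself. If anything warrants a line of care, it is just confirming that the iterated sum notation is well-defined and that regrouping is legitimate in the semilattice, but this is immediate from the axioms (1)--(7).
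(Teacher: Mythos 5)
Your proof is correct and follows essentially the same route as the paper's: a routine induction on $n$ with base case $(ad)$, using one application of $(ad)$ and one of the inductive hypothesis per step (the paper merely applies them in the opposite order, splitting off the last two summands via the hypothesis first and then decomposing with $(ad)$, while you split off the last summand with $(ad)$ first).
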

\begin{proof}
	We will prove the lemma by induction on $n$. The base of induction is obvious. Let $n>2$ and $\underline{B}\models d_{n-1}$. We will prove that $\underline{B}\models d_n$. Let $x\leq a_1+\ldots+a_n=a_1+\ldots+(a_{n-1}+a_n)$. By the induction hypothesis, there are $a_1'\leq a_1,\ldots,a_{n-2}'\leq a_{n-2}$, $y\leq a_{n-1}+a_n$ such that $x=a_1'+\ldots+a_{n-2}'+y$. Since $y\leq a_{n-1}+a_n$, by axiom (ad), there are $a_{n-1}'\leq a_{n-1}$, $a_n'\leq a_n$ such that $y=a_{n-1}'+a_n'$. $\Box$
\end{proof}

\begin{lemma}\label{lemma4}
	Let $x=s_1^1+\ldots+s_1^i=\ldots=s_m^1+\ldots+s_m^i$. Then there are $t_1,\ldots,t_n$ such that $x=t_1+\ldots+t_n$ and for every $j\in\{1,\ldots,n\}$, there are $l_1,\ldots,l_m\in\{1,\ldots,i\}$ such that $t_j\leq s_1^{l_1},\ldots,s_m^{l_m}$. 
\end{lemma}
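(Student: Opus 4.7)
The plan is to prove Lemma \ref{lemma4} by induction on $m$, the number of presentations of $x$ as a sum of $i$ summands, using Lemma \ref{lemma3} as the core refinement tool.

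For the base case $m=1$, one simply takes $n = i$ and $t_j = s_1^j$; then $t_j \leq s_1^{l_1}$ holds with $l_1 = j$, and $x = t_1 + \ldots + t_n$ is given. For the inductive step, assume the claim holds for $m-1$ presentations. Applied to $x = s_1^1+\ldots+s_1^i = \ldots = s_{m-1}^1+\ldots+s_{m-1}^i$, this yields elements $t_1,\ldots,t_n$ with $x = t_1+\ldots+t_n$ and, for each $j$, indices $l_1^j,\ldots,l_{m-1}^j \in \{1,\ldots,i\}$ with $t_j \leq s_1^{l_1^j},\ldots,s_{m-1}^{l_{m-1}^j}$. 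Now I would bring in the final presentation $x = s_m^1+\ldots+s_m^i$: since each $t_j \leq x = s_m^1+\ldots+s_m^i$, Lemma \ref{lemma3} produces $t_j^1 \leq s_m^1,\ldots,t_j^i \leq s_m^i$ with $t_j = t_j^1+\ldots+t_j^i$. Taking the doubly-indexed family $\{t_j^k : 1 \leq j \leq n,\ 1 \leq k \leq i\}$ as the new list of $t$'s, one gets $x = \sum_{j,k} t_j^k$ and $t_j^k \leq t_j \leq s_r^{l_r^j}$ for $r \leq m-1$, while $t_j^k \leq s_m^k$; so the required tuple of indices for $t_j^k$ is $(l_1^j,\ldots,l_{m-1}^j,k)$.

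No serious obstacle is anticipated: the argument is a clean iterative refinement, and everything needed is Lemma \ref{lemma3}, which in turn follows from repeated application of axiom $(\mathit{ad})$. The only point requiring a little care is bookkeeping: the index tuples $(l_1,\ldots,l_m)$ may depend on $j$, and one must keep track of which index record attaches to which piece after splitting. I would avoid any appeal to the contact axioms here, since the lemma is purely a distributivity statement about joins. With this in hand, the subsequent verification that DCJS satisfies $A^1_{m,i}$ and $A_{n,i}$ will be able to start from a single common refinement of all the $s_j$'s (and similarly the $t_j$'s).
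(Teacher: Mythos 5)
Your proposal is correct and follows essentially the same route as the paper's proof: induction on $m$, applying the induction hypothesis to the first $m-1$ presentations and then refining each resulting $t_j$ against $x=s_m^1+\ldots+s_m^i$ via Lemma~\ref{lemma3}, with the new index tuple $(l_1^j,\ldots,l_{m-1}^j,k)$ attached to each piece $t_j^k$. Your explicit bookkeeping of the $j$-dependence of the index tuples is a minor presentational improvement over the paper, not a different argument.
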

\begin{proof}
	Induction on $m$. The base of induction is trivial. Let $m>1$ and the lemma is true for $m-1$. We will prove that it is true for $m$. Let $x=s_1^1+\ldots+s_1^i=\ldots=s_m^1+\ldots+s_m^i$. By the induction hypothesis, there are $t_1,\ldots,t_n$ such that $x=t_1+\ldots+t_n$ and for every $j\in\{1,\ldots,n\}$, there are $l_1,\ldots,l_{m-1}\in\{1,\ldots,i\}$ such that $t_j\leq s_1^{l_1},\ldots,s_{m-1}^{l_{m-1}}$. Now we consider the finite joins $x=s_m^1+\ldots+s_m^i=t_1+\ldots+t_n$. We have that for every $j\in\{1,\ldots,n\}$, $t_j\leq t_1+\ldots+t_n=s_m^1+\ldots+s_m^i$. Using this fact and Lemma~\ref{lemma3}, we get that for every $j\in\{1,\ldots,n\}$, there are $v_j^1\leq s_m^1,\ldots,v_j^i\leq s_m^i$ such that $t_j=v_j^1+\ldots+v_j^i$. Thus $x=v_1^1+\ldots+v_1^i+\ldots+v_n^1+\ldots+v_n^i$ and for any $j\in\{1,\ldots,n\}$, $k\in\{1,\ldots,i\}$, $v_j^k\leq t_j\leq s_1^{l_1},\ldots,s_{m-1}^{l_{m-1}}$ and $v_j^k\leq s_m^k$. $\Box$
\end{proof}

\begin{lemma}\label{lemma5}
	Let $m$, $i\geq 1$. Then $\underline{B}\models A_{m,i}^1$.
\end{lemma}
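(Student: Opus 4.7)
The plan is to use Lemmas \ref{lemma3} and \ref{lemma4} to refine the decompositions of $x$ and $y$ simultaneously across all $m$ sums, then squeeze out the contact information via axioms (11), (12), and (13).

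First I would fix $k \in \{1,\ldots,m\}$ and apply Lemma \ref{lemma3} to the inequality $x \leq s_k = s_k^1 + \ldots + s_k^i$ to obtain a decomposition $x = x_k^1 + \ldots + x_k^i$ with $x_k^j \leq s_k^j$. Doing this for every $k$ gives $m$ decompositions of the single element $x$ into $i$ summands each. Now Lemma \ref{lemma4} applied to these $m$ decompositions yields elements $p_1,\ldots,p_n$ with $x = p_1+\ldots+p_n$ such that each $p_r$ is bounded by one summand from every decomposition, i.e., for each $r$ there are indices $l_1^r,\ldots,l_m^r \in \{1,\ldots,i\}$ with $p_r \leq x_1^{l_1^r},\ldots,x_m^{l_m^r}$, and hence $p_r \leq s_1^{l_1^r},\ldots,s_m^{l_m^r}$. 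An analogous refinement applied to $y$ produces $y = q_1+\ldots+q_{n'}$ where for each $r'$ there are indices $k_1^{r'},\ldots,k_m^{r'}$ with $q_{r'} \leq t_1^{k_1^{r'}},\ldots,t_m^{k_m^{r'}}$.

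Next, from $xCy$ I would expand both sides as these joins and apply axiom (11) repeatedly (it is already listed among the DCJS axioms) to deduce that there exist indices $r_0$ and $r_0'$ with $p_{r_0} C q_{r_0'}$. Setting $l_k := l_k^{r_0}$ and $k_k := k_k^{r_0'}$ for each $k=1,\ldots,m$ produces the candidate choice function of the axiom. It remains to verify the conjunction inside the big disjunction. By axiom (9) we have $p_{r_0} \neq 0$ and $q_{r_0'} \neq 0$, and then axiom (13) gives $p_{r_0} C p_{r_0}$ and $q_{r_0'} C q_{r_0'}$. Combining these with $p_{r_0} C q_{r_0'}$ and applying axiom (12) together with symmetry (10) on both sides (monotonicity of $C$ in each argument), the bounds $p_{r_0} \leq s_j^{l_j}$ and $q_{r_0'} \leq t_u^{k_u}$ yield $s_j^{l_j} C s_u^{l_u}$, $t_j^{k_j} C t_u^{k_u}$, and $s_j^{l_j} C t_u^{k_u}$ for all $j,u$, which is exactly the required witness.

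The main obstacle, or really the only non-bookkeeping step, is step one: simultaneously refining the $m$ representations of $x$ so that a single summand $p_r$ sits under one chosen summand from \emph{every} expansion $s_k^1+\ldots+s_k^i$. This is precisely what Lemma \ref{lemma4} is tailored to provide, so once that lemma is in hand the rest is a clean application of the contact axioms. The bulk of the remaining work is indexing carefully so that the one pair $(p_{r_0},q_{r_0'})$ witnessed by axiom (11) supplies a single tuple $(l_1,\ldots,l_m,k_1,\ldots,k_m)$ that satisfies all three blocks of contacts at once.
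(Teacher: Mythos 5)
Your proposal is correct and follows essentially the same route as the paper's proof: refine the $m$ decompositions of $x$ (and of $y$) via Lemma~\ref{lemma3}, merge them into a single common refinement via Lemma~\ref{lemma4}, extract one contacting pair by repeated use of axiom (11), and then propagate contact upward with axioms (9), (10), (12), (13). The only cosmetic difference is notation for the intermediate summands.
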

\begin{proof}
	Let $xCy$, $x\leq s_1,\ldots,s_m$, $y\leq t_1,\ldots,t_m$, $s_1=s_1^1+\ldots+s_1^i,\ldots,s_m=s_m^1+\ldots+s_m^i$, $t_1=t_1^1+\ldots+t_1^i,\ldots,t_m=t_m^1+\ldots+t_m^i$. Using Lemma~\ref{lemma3}, we obtain that there are $s_{\alpha 2}^{\beta}\leq s_{\alpha}^{\beta}$, $t_{\alpha 2}^{\beta}\leq t_{\alpha}^{\beta}$ for $\alpha=1,\ldots,m$ and $\beta=1,\ldots,i$ such that $x=s_{12}^1+\ldots+s_{12}^i=\ldots=s_{m2}^1+\ldots+s_{m2}^i$; $y=t_{12}^1+\ldots+t_{12}^i=\ldots=t_{m2}^1+\ldots+t_{m2}^i$. Using Lemma~\ref{lemma4}, we obtain that there are $u_1,\ldots,u_n,v_1,\ldots,v_k$ such that $x=u_1+\ldots+u_n$, $y=v_1+\ldots+v_k$; for every $z\in\{1,\ldots,n\}$, there are $l_1,\ldots,l_m\in\{1,\ldots,i\}$ such that $u_z\leq s_{12}^{l_1},\ldots,s_{m2}^{l_m}$; for every $z\in\{1,\ldots,k\}$, there are $j_1,\ldots,j_m\in\{1,\ldots,i\}$ such that $v_z\leq t_{12}^{j_1},\ldots,t_{m2}^{j_m}$. By axiom (11) it can be easily verified that $(u_1+\ldots+u_n)C(v_1+\ldots+v_k)$ implies that there are $z_1\in\{1,\ldots,n\}$, $z_2\in\{1,\ldots,k\}$ such that $u_{z_1}C v_{z_2}$. Clearly there are $l_1,\ldots,l_m\in\{1,\ldots,i\}$ such that $u_{z_1}\leq s_1^{l_1},\ldots,s_m^{l_m}$; there are $j_1,\ldots,j_m\in\{1,\ldots,i\}$ such that $v_{z_2}\leq t_1^{j_1},\ldots,t_m^{j_m}$. Using axiom (12), we get that every element among $s_1^{l_1},\ldots,s_m^{l_m}$ is in contact with every element among $t_1^{j_1},\ldots,t_m^{j_m}$. By $u_{z_1}C v_{z_2}$ and axiom (9), $u_{z_1}\neq 0$ and hence by axiom (13), $u_{z_1}C u_{z_1}$; so using axiom (12) and $u_{z_1}\leq s_1^{l_1},\ldots,s_m^{l_m}$, we obtain that every two elements among $s_1^{l_1},\ldots,s_m^{l_m}$ are in contact. Similarly every two elements among $t_1^{j_1},\ldots,t_m^{j_m}$ are in contact. $\Box$
\end{proof}

\begin{lemma}\label{lemma6}
	Let $n$, $i\geq 1$. Then $\underline{B}\models A_{n,i}$.
\end{lemma}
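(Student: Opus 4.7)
The plan is to mirror the strategy of Lemma~\ref{lemma5}, replacing the role of the pair $(x,y)$ in contact with the single element $t$ that is not $\leq u$.

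First, for each $\alpha\in\{1,\ldots,n\}$ the hypothesis gives $t\leq x_\alpha = x_\alpha^1+\ldots+x_\alpha^i$, so by Lemma~\ref{lemma3} there exist $s_\alpha^1\leq x_\alpha^1,\ldots,s_\alpha^i\leq x_\alpha^i$ with $t = s_\alpha^1+\ldots+s_\alpha^i$. This produces $n$ different decompositions of the same element $t$, which is exactly the situation to which Lemma~\ref{lemma4} applies.

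Next, invoking Lemma~\ref{lemma4} on $t=s_1^1+\ldots+s_1^i=\ldots=s_n^1+\ldots+s_n^i$, I obtain a single decomposition $t = t_1+\ldots+t_p$ such that for every $j\in\{1,\ldots,p\}$ there exist $l_1,\ldots,l_n\in\{1,\ldots,i\}$ with $t_j\leq s_1^{l_1},\ldots,s_n^{l_n}$, and hence $t_j\leq x_1^{l_1},\ldots,x_n^{l_n}$ as well. Since $t\not\leq u$ while axiom (6) would give $t\leq u$ if every $t_j$ were $\leq u$, at least one summand $t_{j_0}$ satisfies $t_{j_0}\not\leq u$. Let $l_1,\ldots,l_n$ be the corresponding indices, so $t_{j_0}\leq x_\alpha^{l_\alpha}$ for every $\alpha$.

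It remains to check the two conclusions of $A_{n,i}$ for this choice of indices. For the non-inclusions: if $x_\alpha^{l_\alpha}\leq u$ held for some $\alpha$, then $t_{j_0}\leq x_\alpha^{l_\alpha}\leq u$ by axiom~(3), contradicting $t_{j_0}\not\leq u$. For the contacts: from $t_{j_0}\not\leq u$ and axiom~(7) we get $t_{j_0}\neq 0$ (otherwise $t_{j_0}=0\leq u$), so axiom~(13) yields $t_{j_0}C t_{j_0}$. Then a single application of axiom~(12) combined with symmetry (axiom~(10)) gives $x_\alpha^{l_\alpha}C x_\beta^{l_\beta}$ for all $\alpha,\beta$, because $t_{j_0}\leq x_\alpha^{l_\alpha}$ and $t_{j_0}\leq x_\beta^{l_\beta}$.

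The main obstacle is already resolved by the infrastructure of Lemmas~\ref{lemma3} and~\ref{lemma4}: producing a common refinement of $n$ decompositions of $t$ is what makes it possible to read off a coherent choice of indices $l_1,\ldots,l_n$. Once that refinement is in hand, the rest is a routine application of axioms (9), (10), (12), (13) in the style of Lemma~\ref{lemma5}.
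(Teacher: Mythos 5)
Your proof is correct and follows essentially the same route as the paper: apply Lemma~\ref{lemma3} to turn each $t\leq x_\alpha^1+\ldots+x_\alpha^i$ into a decomposition of $t$ itself, use Lemma~\ref{lemma4} to get a common refinement $t=t_1+\ldots+t_p$, pick a summand $t_{j_0}\not\leq u$ via axiom (6), and propagate $t_{j_0}Ct_{j_0}$ upward with axioms (12) and (10). You even state the final non-inclusion step slightly more cleanly than the paper does.
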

\begin{proof}
	Let $t\not\leq u$, $t\leq x_1,\ldots,x_n$, $x_1=x_1^1+\ldots+x_1^i,\ldots,x_n=x_n^1+\ldots+x_n^i$. By Lemma~\ref{lemma3} and Lemma~\ref{lemma4}, there are $t_1,\ldots,t_m$ such that $t=t_1+\ldots+t_m$ and for every $j\in\{1,\ldots,m\}$, there are $l_1,\ldots,l_n\in\{1,\ldots,i\}$ such that $t_j\leq x_1^{l_1},\ldots,x_n^{l_n}$. Suppose for the sake of contradiction that $t_1,\ldots,t_m\leq u$. By axiom (6) we get that $t_1+\ldots+t_m\leq u$, i.e. $t\leq u$ - a contradiction. Consequently there is $j\in\{1,\ldots,m\}$ such that $t_j\not\leq u$ and hence $t_j\neq 0$; so $t_jCt_j$. There are $l_1,\ldots,l_n\in\{1,\ldots,i\}$ such that $t_j\leq x_1^{l_1},\ldots,x_n^{l_n}$. Thus every two elements among $x_1^{l_1},\ldots,x_n^{l_n}$ are in contact. Let $k\in\{1,\ldots,n\}$. Suppose for the sake of contradiction that $x_k^{l_k}\leq u$ but we have $t_j\leq x_k^{l_k}$, so $t\leq u$ - a contradiction. Consequently $x_k^{l_k}\not\leq u$. $\Box$
\end{proof}

\bigskip
By Lemma~\ref{lemma5} and Lemma~\ref{lemma6} we obtain

\begin{proposition}\label{pr0nn}
	Every DCJS is also a CJS.
\end{proposition}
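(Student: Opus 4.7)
The plan is to observe that the axioms (1)--(10) appear verbatim in both definitions, so verifying that a DCJS is a CJS reduces to checking that the two schemas (14) and (15) hold in any DCJS. These are exactly what the two preceding lemmas (Lemma~\ref{lemma5} and Lemma~\ref{lemma6}) establish, so the proposition will follow by direct citation. Thus my main work is conceptual: to lay out why Lemmas~\ref{lemma5} and~\ref{lemma6} together cover everything missing from the DCJS axiomatization relative to the CJS one.

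More concretely, I would first line up the two definitions and note the only differences: the DCJS axioms add (11), (12), (13) and (ad) to the common core, whereas the CJS axioms replace these by the schemas $A^1_{m,i}$ and $A_{n,i}$. Since Lemma~\ref{lemma5} asserts $\underline{B}\models A^1_{m,i}$ for all $m,i\geq 1$, axiom schema (14) holds in $\underline{B}$; since Lemma~\ref{lemma6} asserts $\underline{B}\models A_{n,i}$ for all $n,i\geq 1$, axiom schema (15) holds in $\underline{B}$. Together with the shared axioms (1)--(10), this gives exactly the list required by the CJS definition.

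The main conceptual obstacle is not in this proposition itself but in the two lemmas it rests on, and I would want to be sure the chain of dependencies is sound: (ad) is iterated via induction on arity into Lemma~\ref{lemma3}; Lemma~\ref{lemma3} together with a second induction (on the number of parallel decompositions) yields the common refinement Lemma~\ref{lemma4}; and the two representation lemmas, combined with axiom (11) to localize contacts and axioms (12), (13) to spread them back up, give the schema $A^1_{m,i}$ (and, with axiom (6) in place of (11), the schema $A_{n,i}$). Once that pipeline is in place the proposition itself is immediate.

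Hence my proof will simply consist of the sentence that, by Lemma~\ref{lemma5} axiom (14) holds in $\underline{B}$ and by Lemma~\ref{lemma6} axiom (15) holds in $\underline{B}$, so since axioms (1)--(10) hold by the definition of DCJS, $\underline{B}$ is a CJS. $\Box$
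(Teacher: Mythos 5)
Your proposal is correct and follows exactly the paper's route: the paper likewise derives the proposition immediately from Lemma~\ref{lemma5} (giving schema (14)) and Lemma~\ref{lemma6} (giving schema (15)), the remaining axioms (1)--(10) being common to both definitions.
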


\section{Examples of contact join-semilattices and distributive contact join-semilattices}

In this section we will give concrete examples of CJS and DCJS. These examples are considered as "standard examples" because later on we will prove representation theorems of CJS and DCJS by algebras of such standard type.

We will need the following proposition

\begin{proposition}\label{pr1nn}
	Every contact algebra is a DCJS.
\end{proposition}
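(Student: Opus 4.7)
The plan is to unpack both definitions and verify the DCJS axioms one by one on an arbitrary contact algebra $\underline{B} = (B, 0, 1, \cdot, +, \ast, C)$, since most of them are either part of the Boolean structure or literally the contact axioms (C1)--(C5).

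First I would observe that the reduct $(B, \leq, 0, 1, +)$ of any Boolean algebra is a bounded join-semilattice, so axioms (1)--(8) hold for free. For the contact part, I would match (9) with (C1), (10) with (C4), (11) with (C3), and (13) with (C5); for (12) I would specialize (C2) by taking $c = x$ and using reflexivity $x \leq x$ together with the hypothesis $y \leq y'$, which gives $xCy'$ immediately.

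The only axiom requiring genuine work is the distributivity axiom (ad): given $x \leq a + b$, we must produce $a' \leq a$ and $b' \leq b$ with $x = a' + b'$. The natural witnesses in a Boolean algebra are $a' := x \cdot a$ and $b' := x \cdot b$; then $a' \leq a$ and $b' \leq b$ are clear, and distributivity of $\cdot$ over $+$ together with $x \leq a + b$ yields
\[ a' + b' = (x \cdot a) + (x \cdot b) = x \cdot (a + b) = x. \]
This step is essentially the only place where the Boolean (in fact, distributive lattice) structure of the contact algebra is used beyond the join-semilattice reduct, and it is what forces the distributivity hypothesis rather than mere join-semilattice structure.

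I do not anticipate any real obstacle: once (ad) is established via the Boolean meet as above, every clause of Definition \ref{def0} has been verified, and the proposition follows. The proof is essentially a checklist, whose only non-formal ingredient is choosing the correct witnesses $x \cdot a$, $x \cdot b$ for the existential in (ad).
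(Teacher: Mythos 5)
Your proof is correct and follows essentially the same route as the paper: axioms (1)--(13) are immediate from the Boolean and contact-algebra axioms, and (ad) is established with exactly the same witnesses $a' = x \cdot a$, $b' = x \cdot b$ via $x = x \cdot (a+b) = x\cdot a + x\cdot b$. Your write-up merely makes explicit the axiom-by-axiom matching that the paper dismisses as obvious.
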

\begin{proof}
	Let $\underline{B}$ be a contact algebra. Obviously axioms $(1),\ldots,(13)$ are true in $\underline{B}$. We will prove that $\underline{B}\models (ad)$. Let $x\leq a+b$. We have $x\cdot a\leq a$, $x\cdot b\leq b$ and $x=x\cdot(a+b)=x\cdot a+x\cdot b$, because $\underline{B}$ is a distributive lattice. $\Box$
\end{proof}

\bigskip
The following lemma shows a set-theoretical example of CJS

\begin{lemma}\label{lemma5nn}
	Let $W$ be a nonempty set and $B$ be a family of subsets of $W$, containing $\emptyset$, $W$ and closed under $\cup$. We define in $B$: $0=\emptyset$, $1=W$, $a+b=a\cup b$, $a\leq b$ iff $a\subseteq b$, $aCb$ iff $a\cap b\not=\emptyset$. Then the obtained structure $\underline{B}=(B,\leq,0,1,+,C)$ is a CJS.
\end{lemma}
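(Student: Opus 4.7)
The plan is to verify each of the defining axioms of a CJS directly from the set-theoretic definitions. Axioms (1)--(8) are immediate properties of $\subseteq$ and $\cup$ together with the fact that $\emptyset\subseteq a\subseteq W$ for every $a\in B$. Axiom (9) holds because if $a\cap b\neq\emptyset$ then certainly $a\neq\emptyset$, and axiom (10) is the symmetry of intersection. So the real content is to check the two axiom schemes (14) and (15).

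For both schemes the key observation is that contact is witnessed by an actual point, and that membership distributes over unions. This lets us reduce each scheme to a single choice of witness. Concretely, for $A^1_{m,i}$ assume $x\cap y\neq\emptyset$ together with the hypotheses $x\subseteq s_\alpha=\bigcup_{\beta=1}^i s_\alpha^\beta$ and $y\subseteq t_\alpha=\bigcup_{\beta=1}^i t_\alpha^\beta$. Pick any $p\in x\cap y$. For each $\alpha\in\{1,\ldots,m\}$ we have $p\in s_\alpha$, so there is some $l_\alpha\in\{1,\ldots,i\}$ with $p\in s_\alpha^{l_\alpha}$; similarly there is some $k_\alpha\in\{1,\ldots,i\}$ with $p\in t_\alpha^{k_\alpha}$. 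Then every set among $s_1^{l_1},\ldots,s_m^{l_m},t_1^{k_1},\ldots,t_m^{k_m}$ contains the common point $p$, so any two of them intersect, i.e.\ are in contact. This gives precisely the required disjunct in the conclusion of $A^1_{m,i}$.

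For $A_{n,i}$ the argument is analogous. Assume $t\not\subseteq u$ and $t\subseteq x_k=\bigcup_{\beta=1}^i x_k^\beta$ for each $k$. Choose a point $p\in t\setminus u$. For every $k\in\{1,\ldots,n\}$ the inclusion $p\in t\subseteq x_k$ gives some $j_k\in\{1,\ldots,i\}$ with $p\in x_k^{j_k}$. Since $p\notin u$, each $x_k^{j_k}$ fails to be contained in $u$; and since each $x_k^{j_k}$ contains the single point $p$, any pair of them intersects, so they are pairwise in contact. This is exactly the required disjunct of $A_{n,i}$.

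No step is really an obstacle here; the only thing to be mindful of is the bookkeeping in the conclusion of $A^1_{m,i}$, where one must check that the indexing on the $s$-side (using $l_1,\ldots,l_m$) and the $t$-side (using $k_1,\ldots,k_m$) can be chosen independently, which is automatic in the pointwise argument above.
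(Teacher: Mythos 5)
Your proof is correct, but it takes a genuinely different route from the paper. The paper does not verify the axioms directly: it regards $\underline{B}$ as a substructure of the relational contact algebra $\underline{B_1}$ over $(W,=)$ with $B_1=2^W$, invokes Proposition~\ref{pr1nn} (every contact algebra is a DCJS) and Proposition~\ref{pr0nn} (every DCJS is a CJS) to conclude that $\underline{B_1}$ is a CJS, and then uses the fact that the CJS axioms are universal formulas, hence inherited by substructures. Your argument instead checks the schemes $A^1_{m,i}$ and $A_{n,i}$ directly by producing a single point $p$ (in $x\cap y$, respectively in $t\setminus u$) that lies in one summand of each join, so that all chosen summands pairwise intersect; this is sound, and your remark that the indices $l_\alpha$ and $k_\alpha$ are chosen independently but witnessed by the same $p$ is exactly the point that makes the bookkeeping trivial. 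What the paper's route buys is economy: it reuses the already-established (and nontrivial) Proposition~\ref{pr0nn}, whose proof occupies Lemmas~\ref{lemma3}--\ref{lemma6}. What your route buys is self-containedness and some insight: it shows concretely why the seemingly complicated schemes (14) and (15) are valid in the standard set-theoretical model, without any appeal to the axiom (ad) or to distributivity.
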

\begin{proof}
	We consider $\underline{B_1}$ - the relational contact algebra over $(W,=)$, where $B_1=2^W$. From Proposition~\ref{pr1nn} we get that $\underline{B_1}$ is a DCJS and by Proposition~\ref{pr0nn}, $\underline{B_1}$ is a CJS. Clearly $\underline{B}$ is a substructure of $\underline{B_1}$. But we also have that the axioms of CJS can be considered as universal formulas and therefore $\underline{B}$ is also a CJS. $\Box$
\end{proof}

\bigskip
The following lemma shows a relational example of DCJS

\begin{lemma}\label{lemma6nn}
	Let $(W,R)$ be a relational system with a reflexive and symmetric relation $R$ and let $B$ be a family of subsets of $W$, containing $\emptyset$, $W$ and closed under $\cup$. We define in $B$: $0=\emptyset$, $1=W$, $a+b=a\cup b$, $a\leq b$ iff $a\subseteq b$, $aCb$ iff $(\exists U\in a)(\exists V\in b)(URV)$. If in $\underline{B}=(B,\leq,0,1,+,C)$ is fulfilled the axiom (ad), then $\underline{B}$ is a DCJS.
\end{lemma}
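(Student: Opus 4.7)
The plan is to reduce to the already-established Lemma~\ref{lemma5nn}/Proposition~\ref{pr1nn} style substructure argument, and then invoke the hypothesis for (ad) separately. Specifically, consider the relational contact algebra $\underline{B_1}$ over $(W,R)$ with underlying Boolean algebra $B_1 = 2^W$; by Proposition~\ref{pr1nn} it is a DCJS, so in particular all of axioms (1)--(13) hold in $\underline{B_1}$. The structure $\underline{B}$ is a substructure of $\underline{B_1}$ in the language $(\leq,0,1,+,C)$: indeed, $\emptyset, W \in B$, $B$ is closed under $\cup$, and the relations $\subseteq$ and $C$ on elements of $B$ are restrictions of the corresponding relations on $B_1$. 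Since axioms (1)--(13) are all universal formulas (each is an implication between atomic or negated-atomic formulas with only universal quantifiers), they are inherited by the substructure $\underline{B}$.

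Therefore it only remains to account for axiom (ad), which is precisely the hypothesis of the lemma. Combining the universal axioms (1)--(13) transferred from $\underline{B_1}$ with the assumed (ad), the structure $\underline{B}$ satisfies all the defining axioms of Definition~\ref{def0}, and so is a DCJS.

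There is really no main obstacle here: the content of the proof is just the observation that (1)--(13) are universal while (ad) is not, so only (ad) needs to be separately postulated. If one prefers a direct verification instead of the substructure argument, axioms (9), (10), (12) follow immediately from the definition of $C$ on $B$; (11) follows since any $V \in y \cup z$ lies in $y$ or in $z$; and (13) follows from reflexivity of $R$, picking any $U \in x$ when $x \neq \emptyset$ to get $URU$. Either presentation completes the proof. $\Box$
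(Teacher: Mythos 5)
Your proof is correct and takes essentially the same route as the paper: both pass to the relational contact algebra $\underline{B_1}$ over $(W,R)$ with $B_1=2^W$, apply Proposition~\ref{pr1nn}, observe that $\underline{B}$ is a substructure in the reduced language and that axioms $(1),\ldots,(13)$ are universal and hence inherited, and then invoke the hypothesis for (ad). The appended direct verification is an optional extra, not a departure from the paper's argument.
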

\begin{proof}
	We consider $\underline{B_1}$ - the relational contact algebra over $(W,R)$, where $B_1=2^W$. By Proposition~\ref{pr1nn}, $\underline{B_1}$ is a DCJS. Clearly $\underline{B}$ is a substructure of $\underline{B_1}$. Axioms $(1),\ldots,(13)$ can be considered as universal formulas, they are true in $\underline{B_1}$ (since $\underline{B_1}$ is a DCJS); so they are true also in the substructure $\underline{B}$. We have that in $\underline{B}$ is true (ad) and consequently $\underline{B}$ is a DCJS. $\Box$
\end{proof}

\bigskip
The following lemmas show topological examples of CJS and of DCJS

\begin{lemma}\label{lemma7nn}
	Let $X$ be a topological space and $B$ be a subset of $RC(X)$, containing $\emptyset$, $X$ and closed under $\cup$. We define in $B$: $0=\emptyset$, $1=X$, $a+b=a\cup b$, $a\leq b$ iff $a\subseteq b$, $aCb$ iff $a\cap b\neq\emptyset$. Then the obtained structure $\underline{B}=(B,\leq,0,1,+,C)$ is a CJS.
\end{lemma}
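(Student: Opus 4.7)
The plan is to imitate the argument of Lemma~\ref{lemma5nn}, replacing the relational contact algebra over $(W,=)$ by the topological contact algebra over $X$. Concretely, I would first invoke the fact recalled in the Preliminaries that $\underline{RC(X)}=(RC(X),\leq,0,1,\cdot,+,\ast,C)$ is a contact algebra; then apply Proposition~\ref{pr1nn} to conclude it is a DCJS, and Proposition~\ref{pr0nn} to conclude it is a CJS.

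Next I would argue that $\underline{B}$, in the reduced signature $(L,\leq,0,1,+,C)$, is a substructure of the CJS-reduct of $\underline{RC(X)}$. The constants match by hypothesis ($0=\emptyset\in B$, $1=X\in B$); the order $\subseteq$ and the contact relation ``$a\cap b\neq\emptyset$'' are defined in $\underline{B}$ exactly as in $\underline{RC(X)}$; and closure under $+$ is the key point: in $RC(X)$ the join is $a+b=a\cup b$ (union of two regular closed sets is regular closed), so the hypothesis that $B$ is closed under $\cup$ gives closure under the join of $\underline{RC(X)}$.

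Finally, I would observe that each of the CJS axioms (1)–(10), (14), (15) is a universal first-order formula in the signature $(L,\leq,0,1,+,C)$: they are all of the form ``universally quantified implication whose conclusion is a (possibly disjunctive) quantifier-free combination of atomic formulas.'' Universal formulas are preserved under substructures, hence they hold in $\underline{B}$, so $\underline{B}$ is a CJS.

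I do not expect a genuine obstacle: the only things to verify carefully are that the join in $RC(X)$ really is set-theoretic union (a standard fact about regular closed sets) and that all CJS axioms are indeed purely universal (inspecting (14) and (15), their disjuncts and conjuncts are all over fixed finite index sets determined by $m,n,i$, so no existential quantifier appears). Once these two observations are made, the substructure argument closes the proof in the same line as Lemma~\ref{lemma5nn}.
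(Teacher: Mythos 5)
Your proposal is correct and follows essentially the same route as the paper: the paper's proof likewise observes that $\underline{B}$ is a substructure of the topological contact algebra $\underline{RC(X)}$ and then reuses the argument of Lemma~\ref{lemma5nn} (contact algebra $\Rightarrow$ DCJS by Proposition~\ref{pr1nn} $\Rightarrow$ CJS by Proposition~\ref{pr0nn}, plus preservation of the universal CJS axioms under substructures). Your explicit check that the join of $\underline{RC(X)}$ restricted to $B$ is set-theoretic union is a worthwhile detail the paper leaves implicit, but it is not a different method.
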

\begin{proof}
	Clearly $\underline{B}$ is a substructure of the topological contact algebra over $X$ and similarly as in the proof of Lemma~\ref{lemma5nn} we get that $\underline{B}$ is a CJS. $\Box$
\end{proof}

\begin{lemma}\label{lemma8nn}
		Let $X$ be a topological space and $B$ be a subset of $RC(X)$, containing $\emptyset$, $X$ and closed under $\cup$. We define in $B$: $0=\emptyset$, $1=X$, $a+b=a\cup b$, $a\leq b$ iff $a\subseteq b$, $aCb$ iff $a\cap b\neq\emptyset$. If $\underline{B}=(B,\leq,0,1,+,C)$ satisfies the axiom (ad), then $\underline{B}$ is a DCJS.
\end{lemma}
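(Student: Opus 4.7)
The plan is to mimic the substructure argument used in Lemma~\ref{lemma6nn}, replacing the relational contact algebra over $(W,R)$ by the topological contact algebra $\underline{RC(X)}$ over $X$. First, by Proposition~\ref{pr1nn} the topological contact algebra is a DCJS, so in particular it satisfies every axiom in the list (1)--(13).

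Next I would verify that $\underline{B}$ is a substructure of the reduct of $\underline{RC(X)}$ to the signature $(\leq,0,1,+,C)$. By hypothesis $B\subseteq RC(X)$ contains $\emptyset=0$ and $X=1$, and is closed under $\cup$. The key compatibility point to record is that the join operation $+$ in $RC(X)$ is precisely set-theoretic union (the union of two regular closed sets is regular closed), so closure of $B$ under $\cup$ coincides with closure under $+$. The interpretations of $\leq$ and of $C$ on elements of $B$ also agree with the restrictions of their interpretations in $\underline{RC(X)}$, directly from the definitions in the statement. Hence $\underline{B}$ is genuinely a substructure.

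Since axioms (1)--(13) are universal formulas in the signature $(\leq,0,1,+,C)$ and hold in $\underline{RC(X)}$, they are inherited by the substructure $\underline{B}$. The remaining axiom (ad) is assumed to hold in $\underline{B}$ by hypothesis, so by Definition~\ref{def0}, $\underline{B}$ is a DCJS. There is no real obstacle here; the argument is essentially a carbon copy of Lemma~\ref{lemma6nn}, with the only substantive check being the aforementioned agreement between $+$ in $RC(X)$ and ordinary union on the subfamily $B$.
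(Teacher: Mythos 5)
Your proposal is correct and follows essentially the same route as the paper: the paper also proves this by observing that $\underline{B}$ is a substructure of $\underline{RC(X)}$ (which is a DCJS by Proposition~\ref{pr1nn}), inheriting the universal axioms $(1),\ldots,(13)$ and taking (ad) from the hypothesis. Your explicit check that $+$ in $RC(X)$ coincides with union on $B$ is a worthwhile detail the paper leaves implicit.
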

\begin{proof}
	The proof is similar to the proof of Lemma~\ref{lemma7nn}, using that $\underline{B}$ is a substructure of $\underline{RC(X)}$. $\Box$
\end{proof}

\begin{proposition}\label{pr2nn}
	There is a standard set-theoretical example of CJS which is not a DCJS.
\end{proposition}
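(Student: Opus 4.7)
The plan is to construct a small finite example of the form covered by Lemma~\ref{lemma5nn} (so that we get a CJS for free) and then exhibit a single instance where the distributivity axiom (ad) fails (so that we are not a DCJS). Since axioms (1)--(10) are automatic for any family of subsets closed under $\cup$ containing $\emptyset,W$ under the inclusion order, and since the reader can check in one line that (11), (12), (13) hold for the contact relation ``$a\cap b\neq\emptyset$'', the only axiom of DCJS that can possibly fail in a structure produced by Lemma~\ref{lemma5nn} is (ad). So the entire task reduces to choosing $W$ and $B$ in which (ad) breaks.

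Concretely, I would take $W=\{1,2,3\}$ and $B=\{\emptyset,\{1,2\},\{1,3\},\{2,3\},\{1,2,3\}\}$. This family contains $\emptyset$ and $W$ and is closed under $\cup$ (any union of two distinct two-element sets is $W$), so by Lemma~\ref{lemma5nn} the induced structure $\underline{B}=(B,\leq,0,1,+,C)$ is a CJS. To refute (ad), take $x=\{1,2\}$, $a=\{1,3\}$, $b=\{2,3\}$. Then $x\leq a+b = W$, but the only members of $B$ that lie below $a$ are $\emptyset$ and $a$ itself, and similarly for $b$; the four possible sums $a'+b'$ with $a'\leq a$ and $b'\leq b$ are $\emptyset$, $\{1,3\}$, $\{2,3\}$, and $\{1,2,3\}$, none of which equals $\{1,2\}$. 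Hence (ad) fails, and $\underline{B}$ is not a DCJS.

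There is essentially no obstacle here beyond spotting a witness; the combinatorial part is finding a join-subsemilattice of a finite power set that is not distributive as a join-semilattice, and the three two-element subsets of a three-element set are the standard example. In writing this up I would present the verification as two short sentences: one confirming the hypotheses of Lemma~\ref{lemma5nn}, and one exhibiting the failing triple $(x,a,b)$ for (ad).
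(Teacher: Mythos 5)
Your proposal is correct and follows essentially the same route as the paper: build a finite union-closed family of subsets, invoke Lemma~\ref{lemma5nn} to get a CJS, and exhibit one triple violating (ad). The paper uses $W=\{1,2,3,4\}$ with the witness $\{1,2\}\leq\{1,3\}+\{2,4\}$, while you use the three two-element subsets of $\{1,2,3\}$; both witnesses work, and the difference is only in the choice of example.
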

\begin{proof}
	We consider the set $W=\{1,2,3,4\}$. Let $B=\{\emptyset, W, \{1,3\}, \{2,4\}, \{1,2\}, \{1,2,3\}, \{1,2,4\}\}$. It can be easily verified that $B$ is closed under $\cup$. We define in $B$: $0=\emptyset$, $1=W$, $a+b=a\cup b$, $a\leq b$ iff $a\subseteq b$, $aCb$ iff $a\cap b\neq\emptyset$. By Lemma~\ref{lemma5nn}, the structure $\underline{B}=(B,\leq,0,1,+,C)$ is a CJS. But $\underline{B}$ does not satisfy the axiom (ad), because $\{1,2\}\leq\{1,3\}+\{2,4\}$ but $(\forall a'\leq\{1,3\})(\forall b'\leq\{2,4\})(\{1,2\}\neq a'+b')$. $\Box$
\end{proof}

\begin{proposition}\label{pr3nn}
	There is a standard topological example of CJS which is not a DCJS.
\end{proposition}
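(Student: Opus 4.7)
The plan is to import the set-theoretical counterexample of Proposition~\ref{pr2nn} into a topological setting by equipping the underlying four-element set with a topology in which every subset is regular closed. The simplest such choice is the discrete topology on $X=\{1,2,3,4\}$: every subset is then both open and closed, hence equals the closure of its interior, so $RC(X)=2^X$.

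I would then take $B=\{\emptyset, X, \{1,3\}, \{2,4\}, \{1,2\}, \{1,2,3\}, \{1,2,4\}\}$, i.e.\ exactly the family used in Proposition~\ref{pr2nn}. The inclusion $B\subseteq RC(X)$ is automatic from $RC(X)=2^X$, and the verifications that $\emptyset, X\in B$ and that $B$ is closed under $\cup$ are the same combinatorial checks already performed in that proposition. Lemma~\ref{lemma7nn} then gives immediately that $\underline{B}=(B,\leq,0,1,+,C)$ with $aCb$ iff $a\cap b\neq\emptyset$ is a CJS.

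To show that $\underline{B}$ is not a DCJS, I would reuse the witness from Proposition~\ref{pr2nn}: one has $\{1,2\}\leq\{1,3\}+\{2,4\}$, yet the only elements of $B$ below $\{1,3\}$ are $\emptyset$ and $\{1,3\}$, and the only elements of $B$ below $\{2,4\}$ are $\emptyset$ and $\{2,4\}$; none of the four possible unions $a'\cup b'$ formed from these equals $\{1,2\}$, so axiom (ad) fails.

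There is really no obstacle; the only judgement call is whether the discrete topology feels sufficiently "topological". If one prefers a Hausdorff, non-discrete example, the same bookkeeping goes through verbatim when $X$ is taken to be the union of four pairwise disjoint nondegenerate closed intervals $I_1,I_2,I_3,I_4$ in $\mathbb{R}$ with the Euclidean subspace topology, and $B$ is replaced by $\{\emptyset, X, I_1\cup I_3, I_2\cup I_4, I_1\cup I_2, I_1\cup I_2\cup I_3, I_1\cup I_2\cup I_4\}$, each member of which is regular closed in $X$.
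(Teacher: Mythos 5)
Your proposal is correct and follows essentially the same route as the paper: the discrete topology on the four-element set $W$ of Proposition~\ref{pr2nn}, the observation that $RC(W)=2^W$, Lemma~\ref{lemma7nn} for the CJS part, and the same witness $\{1,2\}\leq\{1,3\}+\{2,4\}$ for the failure of axiom (ad). The additional Hausdorff variant with disjoint closed intervals is a harmless embellishment not present in the paper.
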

\begin{proof}
	We consider the same $W$ and $B$ as in the proof of Proposition~\ref{pr2nn}. We define topology on $W$, taking for open all subsets of $W$. It can be easily verified that $RC(W)=2^W$. We define in $B$: $0=\emptyset$, $1=W$, $a+b=a\cup b$, $a\leq b$ iff $a\subseteq b$, $aCb$ iff $a\cap b\neq\emptyset$. By Lemma~\ref{lemma7nn}, the obtained structure $\underline{B}=(B,\leq,0,1,+,C)$ is a CJS. The structure $\underline{B}$ is the same as the structure $\underline{B}$ in the proof of Proposition~\ref{pr2nn} and therefore $\underline{B}$ is not a DCJS. $\Box$
\end{proof}

\section{Representation theorems for contact join-semilattices}

First we will prove a set-theoretical representation theorem of CJS. For this purpose we will need the following definition, taken from the theory of contact algebras

\begin{definition}\label{def2}\cite{dv}
	Let $\underline{B}$ be a CJS. A subset of $B$ $\Gamma$ is called a \textbf{clan} in
	$\underline{B}$ if the following conditions are true:\\
	1) $1\in\Gamma$;\\
	2) $0\notin\Gamma$;\\
	3) $x\in\Gamma$, $x\leq y\rightarrow y\in\Gamma$;\\
	4) $x,y\in\Gamma\rightarrow x C y$;\\
	5) $x+y\in\Gamma\rightarrow x\in\Gamma$ or $y\in\Gamma$.\\
	We denote by $Clans(\underline{B})$ the set of the clans in $\underline{B}$.
\end{definition}

\begin{exa}
	Let $W$ be a nonempty set and $\underline{B}$ be the standard set-theoretical example of CJS of all subsets of $W$. Let $x\in W$. Then it can be easily verified that $P_x=\{P\subseteq W:\ x\in P\}$ is a clan.
\end{exa}

Let $\underline{B}$ be an arbitrary CJS. We will prove several lemmas. The first lemma has two variants - the first variant contains the text in the brackets, the second one - no.

\begin{lemma}\label{lemma1}
	(Let $u\neq 1$.) Let $\Gamma$ be a subset of $B$ and $\Gamma$ satisfies condition 3) from Definition~\ref{def2} and the condition:\\
	$(\ast)\ x_1,\ldots,x_n\in\Gamma\rightarrow$ for every presentation of $x_1,\ldots,x_n$ as finite joins, one element can be chosen of every join $(\not\leq u)$ in such a way that every two chosen elements are in contact.
	
	Let $x+y\in\Gamma$. Then there exists a set $\Gamma_1$, satisfying the same conditions and such that $\Gamma_1=\Gamma\cup\{z:\ x\leq z\}$ or $\Gamma_1=\Gamma\cup\{z:\ y\leq z\}$.
\end{lemma}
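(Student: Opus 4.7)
I would proceed by contradiction: assume neither $\Gamma_1^x := \Gamma \cup \{z : x \leq z\}$ nor $\Gamma_1^y := \Gamma \cup \{z : y \leq z\}$ satisfies the conclusion. Both sets are automatically upward-closed (condition 3): if $w \in \Gamma_1^x$ and $w \leq w'$, then either $w \in \Gamma$ and $w' \in \Gamma$ by $\Gamma$'s upward closure, or $x \leq w \leq w'$ and $w' \in \Gamma_1^x$. So the obstruction must come from $(\ast)$. Extract finite witnesses: $a_1,\ldots,a_m \in \Gamma_1^x$ with presentations $a_j = a_j^1+\cdots+a_j^i$ witnessing the failure of $(\ast)$ for $\Gamma_1^x$, and $b_1,\ldots,b_r \in \Gamma_1^y$ with $b_k = b_k^1+\cdots+b_k^i$ witnessing failure for $\Gamma_1^y$, using a common $i$ (by repeating summands to pad).

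Next I would manufacture a collection of $\Gamma$-elements from these witnesses. For each $a_j \notin \Gamma$ one has $x \leq a_j$, hence $x+y \leq a_j+y$, so $a_j+y \in \Gamma$ by upward closure; set $A_j := a_j+y$ in that case, and $A_j := a_j$ otherwise. Give $A_j$ the presentation obtained by appending $y$ as an extra summand whenever it was added, and pad everything to a common length. Treat the $b_k$'s symmetrically, producing $B_k \in \Gamma$ with presentations appending $x$. Apply $(\ast)$ for $\Gamma$ to $\{A_j\}\cup\{B_k\}$ to obtain selections $c_j$ (a summand of $A_j$) and $d_k$ (a summand of $B_k$) which are pairwise in contact, and in the first variant all $\not\leq u$.

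A three-way case analysis closes the argument. If no $c_j$ equals the adjoined $y$, then every $c_j$ is a summand of $a_j$ itself, and these $c_j$ together form a good selection for the $\Gamma_1^x$-witness, contradicting its alleged failure. Symmetrically, if no $d_k$ equals $x$, the $\Gamma_1^y$-failure is contradicted. Otherwise some $c_{j^\star} = y$ and some $d_{k^\star} = x$ are chosen, and pairwise contact forces $xCy$. Here I would invoke axiom (14), the scheme $A^1_{m,i}$, with the established contact $xCy$, with the bounds $x \leq a_j$ (for the indices $j$ whose $a_j$ needed $y$-padding) on one side and $y \leq b_k$ (for the indices $k$ needing $x$-padding) on the other, and the given presentations. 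This yields refined choices $a_j^{\ell_j}$ and $b_k^{\kappa_k}$ that are pairwise in contact within each side and across. Splicing these refined choices with the uncontaminated $c_{j'}$ (those $j'$ for which the original selection already gave a summand of $a_{j'}$) produces a globally pairwise-in-contact selection for the $\Gamma_1^x$-witness, contradicting its failure. In the first variant with $u$, the parallel invocation of axiom (15), scheme $A_{n,i}$, keeps the refined choices off the down-set of $u$.

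The main obstacle will be the Case C splicing: one must verify that the refined summands produced by axiom (14) are in contact not merely among themselves but also with the already chosen $c_{j'}$ for indices $j'$ whose presentation was not augmented by $y$. The clean way to handle this is to preempt the difficulty by folding everything into a single, slightly larger application of $(\ast)$ for $\Gamma$ from the outset -- for instance, including the element $x+y \in \Gamma$ in the collection with presentation $\{x,y\}$ and using the presentation $\{a_j^1,\ldots,a_j^i,y\}$ for $A_j$ -- so that the cross-contact needed for the splice is delivered by $(\ast)$ for $\Gamma$ directly, with axioms (14) and (15) only ensuring that the resulting combined selection can be restricted to one side of the dichotomy without losing pairwise contact or the $\not\leq u$ condition.
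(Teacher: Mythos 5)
Your overall strategy (suppose both candidate extensions violate $(\ast)$, extract finite witnesses, push the witnesses not lying in $\Gamma$ into $\Gamma$ by joining them with something, apply $(\ast)$ for $\Gamma$ once, and case-analyse the resulting selection) is exactly the paper's strategy, but the crucial combinatorial step is set up differently in your version and does not close. You adjoin a \emph{single} extra summand $y$ to each witness $a_j\geq x$ and a single extra summand $x$ to each witness $b_k\geq y$. This is what creates your Case C, in which the selection picks the adjoined $y$ for some $A_{j^\star}$ and the adjoined $x$ for some $B_{k^\star}$, so that neither side receives a complete selection. Your repair via axiom $A^1_{m,i}$ has two genuine defects: the elements it produces are pairwise in contact among themselves and across the two sides, but nothing forces them to be in contact with the $c_{j'}$ already chosen for the unaugmented witnesses (you name this obstacle yourself); and in the variant with $u$ the scheme $A^1_{m,i}$ gives no control on $\not\leq u$, while a separate invocation of $A_{n,i}$ produces a \emph{different} existential selection that need not agree with the one coming from $A^1_{m,i}$ --- two existentially quantified choices cannot be intersected. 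The fallback you sketch (folding $x+y$ with presentation $x+y$ into the single application of $(\ast)$) does not resolve this either: whatever is selected from that extra join, you still have no summand of $a_{j^\star}$ to substitute for the chosen $y$.

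The paper avoids Case C entirely by a product/pigeonhole construction. With $z_1,\dots,z_k\geq x$ the $x$-side witnesses and $t_1,\dots,t_r\geq y$ the $y$-side witnesses, it puts \emph{all} $kr$ sums $z_s+t_j$ into the collection (each lies in $\Gamma$ because $x+y\leq z_s+t_j$ and $x+y\in\Gamma$), each presented as the concatenation of the presentations of $z_s$ and of $t_j$. Then either for every $s$ at least one pair $(s,j)$ selects inside the $z_s$-block, which yields a full, pairwise-in-contact, $\not\leq u$ selection for the $x$-side and contradicts $(\clubsuit)$; or some $s_0$ has all of its pairs selecting inside the $t_j$-blocks, which yields a full selection for the $y$-side and contradicts $(\spadesuit)$. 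A single application of $(\ast)$ for $\Gamma$ thus delivers all required contacts and the $\not\leq u$ constraint simultaneously, with no splicing and no appeal to axioms (14)--(15) at this stage. To repair your argument you should replace the single adjoined summand by this family of pairwise sums with concatenated presentations.
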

\begin{proof}
	We will prove only the first variant of the lemma. The second variant is proved similarly. Suppose for the sake of contradiction that the following two conditions are true:\\
	$(\clubsuit)$ there are $x_1,\ldots,x_m\in\Gamma$, $z_1,\ldots,z_k\geq x$ and presentations of $x_1,\ldots,x_m,z_1,\\\ldots,z_k$ as finite joins such that it is impossible to be chosen one element $\not\leq u$ of every join in such a way that every two chosen are in contact;\\
	$(\spadesuit)$ there are $y_1,\ldots,y_n\in\Gamma$, $t_1,\ldots,t_r\geq y$ and presentations of $y_1,\ldots,y_n,t_1,\ldots,\\t_r$ as finite joins such that it is impossible to be chosen one element $\not\leq u$ of every join in such a way that every two chosen are in contact.
	
	Let the presentations as finite joins be:
	\begin{IEEEeqnarray*}{lll}
	x_1=x^1_1+\ldots+x^{i_1}_1    & \qquad\qquad\qquad & y_1=y^1_1+\ldots+y^{j_1}_1\\
	\vdots                        & \qquad\qquad\qquad & \vdots\\
	x_m=x^1_m+\ldots+x^{i_m}_m    & \qquad\qquad\qquad & y_n=y^1_n+\ldots+y^{j_n}_n\\
	z_1=z^1_1+\ldots+z^{i_{11}}_1 & \qquad\qquad\qquad & t_1=t^1_1+\ldots+t^{j_{11}}_1\\
	\vdots                        & \qquad\qquad\qquad & \vdots\\
	z_k=z^1_k+\ldots+z^{i_{1k}}_k & \qquad\qquad\qquad & t_r=t^1_r+\ldots+t^{j_{1r}}_r
	\end{IEEEeqnarray*}
	Let $i\in\{1,\ldots,k\}$, $j\in\{1,\ldots,r\}$. It can be easily verified that $x+y\leq z_i+t_j$. We have also that $x+y\in\Gamma$. Consequently $z_i+t_j\in\Gamma$. We have that $x_1,\ldots,x_m,y_1,\ldots,y_n,z_1+t_1,\ldots,z_1+t_r,\ldots,z_k+t_1,\ldots,z_k+t_r\in\Gamma$. Thus by property $(\ast)$ of $\Gamma$ we obtain that one element can be chosen of every of the following joins $(\not\leq u)$:
	\begin{IEEEeqnarray*}{ll}
		x^1_1+\ldots+x^{i_1}_1 & \qquad\qquad\qquad\qquad\qquad\qquad\qquad\qquad\\
		\vdots & \qquad\qquad\qquad\qquad\qquad\qquad\qquad\qquad\\
		x^1_m+\ldots+x^{i_m}_m & \qquad\qquad\qquad\qquad\qquad\qquad\qquad\qquad\\
		y^1_1+\ldots+y^{j_1}_1 & \qquad\qquad\qquad\qquad\qquad\qquad\qquad\qquad\\
		\vdots & \qquad\qquad\qquad\qquad\qquad\qquad\qquad\qquad\\
		y^1_n+\ldots+y^{j_n}_n & \qquad\qquad\qquad\qquad\qquad\qquad\qquad\qquad\\ 
		z^1_1+\ldots+z^{i_{11}}_1+t^1_1+\ldots+t^{j_{11}}_1 & \qquad\qquad\qquad\qquad\qquad\qquad\qquad\qquad\\
		\vdots & \qquad\qquad\qquad\qquad\qquad\qquad\qquad\qquad\\
		z^1_1+\ldots+z^{i_{11}}_1+t^1_r+\ldots+t^{j_{1r}}_r & \qquad\qquad\qquad\qquad\qquad\qquad\qquad\qquad\\ 
		\vdots & \qquad\qquad\qquad\qquad\qquad\qquad\qquad\qquad\\
		z^1_k+\ldots+z^{i_{1k}}_k+t^1_1+\ldots+t^{j_{11}}_1 & \qquad\qquad\qquad\qquad\qquad\qquad\qquad\qquad\\
		\vdots & \qquad\qquad\qquad\qquad\qquad\qquad\qquad\qquad\\
		z^1_k+\ldots+z^{i_{1k}}_k+t^1_r+\ldots+t^{j_{1r}}_r & \qquad\qquad\qquad\qquad\qquad\qquad\qquad\qquad
	\end{IEEEeqnarray*}
	in such a way that every two chosen are in contact. Suppose for the sake of contradiction that for every $s\in\{1,\ldots,k\}$, among $z^1_s,\ldots,z^{i_{1s}}_s$ some element is chosen. But this is a contradiction with $(\clubsuit)$. Consequently there is $s\in\{1,\ldots,k\}$ such that among $z^1_s,\ldots,z^{i_{1s}}_s$ no one is chosen. Consequently one element $(\not\leq u)$ is chosen from every of the joins:
	\begin{IEEEeqnarray*}{ll}
		t^1_1+\ldots+t^{j_{11}}_1 & \qquad\qquad\qquad\qquad\qquad\qquad\qquad\qquad\\
		\vdots & \qquad\qquad\qquad\qquad\qquad\qquad\qquad\qquad\\
		t^1_r+\ldots+t^{j_{1r}}_r & \qquad\qquad\qquad\qquad\qquad\qquad\qquad\qquad\\
		y^1_1+\ldots+y^{j_1}_1 & \qquad\qquad\qquad\qquad\qquad\qquad\qquad\qquad\\
		\vdots & \qquad\qquad\qquad\qquad\qquad\qquad\qquad\qquad\\
		y^1_n+\ldots+y^{j_n}_n & \qquad\qquad\qquad\qquad\qquad\qquad\qquad\qquad
	\end{IEEEeqnarray*}
	in such a way that every two chosen elements are in contact. But this is a contradiction with $(\spadesuit)$. Consequently condition $(\clubsuit)$ is not true or condition $(\spadesuit)$ is not true. Without loss of generality $(\clubsuit)$ is not true i.e. the following condition is satisfied:\\
	$(\heartsuit)$ for any $x_1,\ldots,x_m\in\Gamma$, $z_1,\ldots,z_k\geq x$ and presentations of $x_1,\ldots,x_m$, $z_1,\ldots,z_k$ as finite joins, one element $\not\leq u$ can be chosen of every join in such a way that every two chosen elements are in contact.
	
	We consider the set $\Gamma_1=\Gamma\cup\{z:\ x\leq z\}$. It can be easily proved that $\Gamma_1$ satisfies property 3) from Definition~\ref{def2}. We will prove that $\Gamma_1$ satisfies the property $(\ast)$. Let $a_1,\ldots,a_p,b_1,\ldots,b_q\in\Gamma_1$, where $p+q>0$, $a_1,\ldots,a_p\in\Gamma$, $b_1,\ldots,b_q\geq x$. We will prove that for every presentation of $a_1,\ldots,a_p,b_1,\ldots,b_q$ as finite joins, one element ($\not\leq u$) can be chosen of every join in such a way that every two chosen are in contact.\\
	\textbf{Case 1:} $q=0$\\
	The proof is obvious.\\
	\textbf{Case 2:} $p=0$\\
	Let us have the following presentation of $b_1,\ldots,b_q$ as finite joins:
	\begin{IEEEeqnarray*}{ll}
		b_1=b^1_1+\ldots+b^{l_1}_1 & \qquad\qquad\qquad\qquad\qquad\qquad\qquad\qquad\\
		\vdots & \qquad\qquad\qquad\qquad\qquad\qquad\qquad\qquad\\
		b_q=b^1_q+\ldots+b^{l_q}_q & \qquad\qquad\qquad\qquad\qquad\qquad\qquad\qquad
	\end{IEEEeqnarray*}
	We have also $x+y\in\Gamma$ and we finish the proof, using condition $(\heartsuit)$.\\
	\textbf{Case 3:} $p,q>0$\\
	Again we use condition $(\heartsuit)$. $\Box$
\end{proof}

\begin{lemma}\label{lemma20}
	Let $tCt_1$. Then there is a clan $\Gamma$ such that $t$, $t_1\in\Gamma$.
\end{lemma}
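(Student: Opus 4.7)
The plan is to construct a clan $\Gamma$ containing both $t$ and $t_1$ by a Zorn's lemma / maximality argument driven by Lemma~\ref{lemma1} (in its second, undecorated variant, without the $u$-related conditions). The seed set is
\[ \Gamma_0 = \{z \in B : t \leq z\} \cup \{z \in B : t_1 \leq z\}, \]
which is upward closed by construction and, thanks to the hypothesis $tCt_1$, is in a position to enjoy property $(\ast)$ from Lemma~\ref{lemma1}.

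Step one is verifying $(\ast)$ for $\Gamma_0$. Given $x_1, \ldots, x_n \in \Gamma_0$ with presentations as finite joins, partition the list according to whether $t \leq x_k$ or $t_1 \leq x_k$ (each $x_k$ placed on exactly one side). I would then invoke axiom $A^1_{m,i}$ with $x = t$, $y = t_1$, using the two sub-lists as upper bounds of $t$ and $t_1$. To fit the precise form of $A^1_{m,i}$ one must pad: make the two sides have the same length $m$ by repeating entries or inserting the trivial upper bound $1$, and make every presentation have the same number $i$ of summands by appending zeros, using $1 = 1 + 0 + \ldots + 0$ when needed. The axiom yields a pairwise-in-contact system of chosen summands; by axiom (9) no chosen summand can be a zero-pad, so the choices restrict to genuine summands of the original presentations, which is exactly what $(\ast)$ demands.

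Step two is a standard Zorn argument: the collection of subsets of $B$ that contain $\Gamma_0$ and satisfy upward closure together with $(\ast)$ is closed under unions of chains (since $(\ast)$ is a finitary condition), so it has a maximal element $\Gamma$. I would then check the five clan conditions from Definition~\ref{def2}. Upward closure is built in; $1 \in \Gamma$ because $t \in \Gamma$ and $t \leq 1$; $0 \notin \Gamma$ because $(\ast)$ applied to the single element $0$ with trivial presentation would force $0C0$, contradicting axiom (9); pairwise contact is the two-element, trivial-presentation instance of $(\ast)$. Clan condition 5 is where Lemma~\ref{lemma1} does the real work: if $x + y \in \Gamma$ but neither summand lies in $\Gamma$, Lemma~\ref{lemma1} produces a strict extension of $\Gamma$ still satisfying upward closure and $(\ast)$, contradicting maximality. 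Since $t, t_1 \in \Gamma_0 \subseteq \Gamma$, we are done.

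The main obstacle is the bookkeeping in step one: one must carefully marshal padding by $0$ and by $1$ so that axiom $A^1_{m,i}$ applies in all degenerate configurations (for instance, when every $x_k$ lies above $t$ and none above $t_1$, or when some presentations have only a single summand). Everything else is routine extraction.
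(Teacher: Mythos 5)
Your proposal is correct and follows essentially the same route as the paper: the same seed set $\{z: t\leq z\}\cup\{z: t_1\leq z\}$, verification of $(\ast)$ via $A^1_{m,i}$ after padding the two lists with $1$'s, Zorn's lemma, and the second variant of Lemma~\ref{lemma1} to get clan condition 5 from maximality. The only cosmetic difference is that you equalize the number of summands by appending zeros (correctly excluded as choices via axiom (9), since each chosen summand is in contact with itself), whereas the paper repeats the first summand of each join; both work.
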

\begin{proof}
	We consider $M=\{P\subseteq B:\\
	t,\ t_1\in P;\\
	0\not\in P;\\
	x\in P,\ x\leq y\rightarrow y\in P;\\
	x_1,\ldots,x_k\in P\rightarrow\text{for every presentation of }x_1,\ldots,x_k\text{ as finite joins, one element}\\
	\text{can be chosen of every join in such a way that every two chosen elements are}\\ 
	\text{in contact}\}$.\\
    We will prove that $(M,\subseteq)$ has a maximal element. Let $L$ be a chain in $(M,\subseteq)$. We will prove that $L$ has an upper bound in $M$.\\
    \textbf{Case 1:} $L=\emptyset$\\
    We consider the set $P=\{x\in B:\ t\leq x\text{ or }t_1\leq x\}$. We will prove that $P\in M$. For the purpose we will prove only the last condition for the elements of $M$. The other conditions are obviously true. Let $x_1,\ldots,x_k\in P$. Let $\{x\in \{x_1,\ldots,x_k\}:\ t\leq x\}=\{a_1,\ldots,a_m\}$, where $m\geq 0$. Let $b_1,\ldots,b_n\ (n\geq 0)$ be the rest elements of $\{x_1,\ldots,x_k\}$, i.e. $t_1\leq b_1,\ldots,b_n$. 
    
    Let us have the following presentations of $a_1,\ldots,a_m,b_1,\ldots,b_n$ as finite joins:
    \begin{IEEEeqnarray*}{ll}
    	a_1=a^1_1+\ldots+a^{i_1}_1 & \qquad\qquad\qquad\qquad\qquad\qquad\qquad\qquad\\
    	\vdots & \qquad\qquad\qquad\qquad\qquad\qquad\qquad\qquad\\
    	a_m=a^1_m+\ldots+a^{i_m}_m & \qquad\qquad\qquad\qquad\qquad\qquad\qquad\qquad\\
    	b_1=b^1_1+\ldots+b^{j_1}_1 & \qquad\qquad\qquad\qquad\qquad\qquad\qquad\qquad\\
    	\vdots & \qquad\qquad\qquad\qquad\qquad\qquad\qquad\qquad\\
    	b_n=b^1_n+\ldots+b^{j_n}_n
    \end{IEEEeqnarray*}
    
    We will consider only the case when $m\geq n$. The other case $(n\geq m)$ is symmetric. We have \[tCt_1,\ t\leq a_1,\ldots,a_m,\ t_1\leq\underbrace{b_1,\ldots,b_n,1,\ldots,1}_{m\text{ times}},\ m>0\] Let $i=max(i_1,\ldots,i_m,j_1,\ldots,j_n)$. We supplement every join with its first element in such a way that to have $i$ elements. We use also that $1=\underbrace{1+\ldots+1}_{i\text{ times}}$. By axiom $A^1_{m,i}$ we get that one element can be chosen from the new joins in such a way that every two chosen elements are in contact. Consequently one element can be chosen from every of the initial joins in such a way that every two chosen elements are in contact. Consequently $P\in M$. $P$ is an upper bound of $L$.\\
    \textbf{Case 2:} $L\neq\emptyset$\\
    It can be easily verified that $\bigcup L\in M$. Obviously $\bigcup L$ is an upper bound of $L$.
    
    By Zorn Lemma, $(M,\subseteq)$ has a maximal element $\Gamma$. We will prove that $\Gamma$ is a clan. It is easily seen that $\Gamma$ satisfies conditions $1),\ldots,4)$ of Definition~\ref{def2}. Now we will prove that $\Gamma$ satisfies condition 5) of Definition~\ref{def2}. Let $x+y\in\Gamma$. By the second variant of Lemma~\ref{lemma1}, without loss of generality there exists a set $\Gamma_1$ such that satisfies properties 3) and 4) of Definition~\ref{def2}, the last condition of the definition of $M$ and $\Gamma_1=\Gamma\cup\{z:\ x\leq z\}$. We will prove that $\Gamma_1\in M$. Since $\Gamma\in M$, $t$, $t_1\in\Gamma$ and hence $t$, $t_1\in\Gamma_1$. Suppose for the sake of contradiction that $0\in\Gamma_1$. Since $\Gamma_1$ satisfies condition 4) of Definition~\ref{def2}, $0C0$ and hence $0\neq 0$ - a contradiction. Consequently $0\notin\Gamma_1$. Clearly $\Gamma_1$ satisfies the rest conditions of the definition of $M$. Consequently $\Gamma_1\in M$. We have also that $\Gamma$ is a maximal element of $M$ and $\Gamma\subseteq\Gamma_1$. Thus $\Gamma=\Gamma_1$. Clearly $x\in\Gamma_1$. Consequently $x\in\Gamma$. Thus $\Gamma$ satisfies condition 5) of Definition~\ref{def2}; so $\Gamma$ is a clan. We have that $\Gamma\in M$ and therefore $t$, $t_1\in\Gamma$. $\Box$
    \end{proof}
    
    \begin{lemma}\label{lemma2}
    	Let $t\not\leq u$. Then there is a clan $\Gamma$ such that $t\in\Gamma$, $u\notin\Gamma$. 
    \end{lemma}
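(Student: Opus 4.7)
The plan is to mirror the Zorn-Lemma construction used in Lemma~\ref{lemma20}, but with the contact-selection property $(\ast)$ replaced throughout by the stronger $(\not\leq u)$-variant that already appears (parenthetically) in the first version of Lemma~\ref{lemma1}. Concretely, I would work with
\[ M = \{P\subseteq B : t\in P,\ u\notin P,\ P\text{ is upward closed, and }P\text{ satisfies }(\ast)\text{ with }(\not\leq u)\} \]
ordered by inclusion, and aim to produce a maximal $\Gamma\in M$ and show that it is a clan.

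To initialize I would take $P_0=\{x\in B : t\leq x\}$; the conditions $t\in P_0$, $u\notin P_0$ (from $t\not\leq u$), and upward-closedness are immediate. The substantive content is the $(\not\leq u)$-variant of $(\ast)$: given $x_1,\ldots,x_n\in P_0$ with presentations $x_j=x_j^1+\cdots+x_j^{i_j}$, one has $t\leq x_1,\ldots,x_n$ and $t\not\leq u$, so after padding each sum to a common length $i$ by repeating the first summand (and using $1=1+\cdots+1$ if necessary) axiom $A_{n,i}$ delivers the required simultaneous choice. This is the step that genuinely invokes axiom (15), and I expect the padding/indexing bookkeeping to be the main technical obstacle. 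For a nonempty chain in $M$ the union is an upper bound, since each instance of $(\ast)$ involves only finitely many elements; so Zorn's Lemma produces a maximal $\Gamma\in M$.

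It remains to verify that $\Gamma$ satisfies properties 1)--5) of Definition~\ref{def2}. Items 1), 3), and 4) follow immediately from $t\leq 1$, upward-closure, and a one-term application of $(\ast)$, respectively. Item 2) follows from $0\leq u$: if $0\in\Gamma$ then $(\ast)$ would force a selection $\not\leq u$ from the trivial join $0=0$, which is impossible. For item 5), given $x+y\in\Gamma$, note that $u\neq 1$ since $t\leq 1$ but $t\not\leq u$, so the first variant of Lemma~\ref{lemma1} applies; without loss of generality $\Gamma_1=\Gamma\cup\{z:x\leq z\}$ still satisfies condition 3) and the $(\not\leq u)$-variant of $(\ast)$. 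Trivially $t\in\Gamma_1$, and $u\notin\Gamma_1$ by the same argument used for item 2); hence $\Gamma_1\in M$, and maximality of $\Gamma$ forces $\Gamma_1=\Gamma$, so $x\in\Gamma$, as required.
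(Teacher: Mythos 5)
Your proposal is correct and follows essentially the same route as the paper: the same set $M$ with the $(\not\leq u)$-variant of $(\ast)$, the same base set $\{x : t\leq x\}$ verified via axiom $A_{k,i}$ after padding each join with its first summand, Zorn's Lemma, and the same use of the first variant of Lemma~\ref{lemma1} (noting $u\neq 1$) together with maximality to get condition 5). The only cosmetic difference is your mention of $1=1+\cdots+1$, which is needed in Lemma~\ref{lemma20} but not here; repeating the first summand already suffices.
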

    \begin{proof}
    	We consider the set $M=\{P\subseteq B:\\
    	t\in P,\ u\notin P;\\
    	x\in P,\ x\leq y\rightarrow y\in P;\\
    	x_1,\ldots,x_k\in P\rightarrow\text{ for every presentation of }x_1,\ldots,x_k\text{ as finite joins, one element}\\
    	\text{can be chosen of every join, }\not\leq u\text{, in such a way that every two chosen elements}\\
    	\text{are in contact}\}$.\\
    	We will prove that $(M,\subseteq)$ has a maximal element. Let $L$ be a chain in $(M,\subseteq)$. We will prove that $L$ has an upper bound in $M$.\\
    	\textbf{Case 1:} $L=\emptyset$\\
    	We consider the set $P=\{x\in B:\ t\leq x\}$. We will prove that $P\in M$. The conditions for the elements of $M$, without the last one, are obviously true. We will prove the last condition. Let $x_1,\ldots,x_k\in P$. Let us have the following presentations of $x_1,\ldots,x_k$ as finite joins:
    	\begin{IEEEeqnarray*}{ll}
    		x_1=x^1_1+\ldots+x^{i_1}_1 & \qquad\qquad\qquad\qquad\qquad\qquad\qquad\qquad\\
    		\vdots & \qquad\qquad\qquad\qquad\qquad\qquad\qquad\qquad\\
    		x_k=x^1_k+\ldots+x^{i_k}_k & \qquad\qquad\qquad\qquad\qquad\qquad\qquad\qquad
    	\end{IEEEeqnarray*}
    	Let $i=max(i_1,\ldots,i_k)$. We supplement every join with its first element in such a way that all joins to have $i$ elements. By axiom $A_{k,i}$, one element can be chosen of every join, $\not\leq u$, in such a way that every two chosen elements are in contact. Consequently the last condition of the definition of $M$ is fulfilled. Thus $P\in M$. The set $P$ is an upper bound of $L$.\\
    	\textbf{Case 2:} $L\neq\emptyset$\\
    	It can be easily verified that $\bigcup L\in M$. Clearly $\bigcup L$ is an upper bound of $L$.
    	
    	From Zorn Lemma we obtain  that $(M,\subseteq)$ has a maximal element $\Gamma$. We will prove that $\Gamma$ is a clan. It can be easily verified that $\Gamma$ fulfills conditions $1),\ldots,4)$ of Definition~\ref{def2}. We will prove that $\Gamma$ satisfies condition $5)$ of Definition~\ref{def2}. Let $x+y\in\Gamma$. We must prove that $x\in\Gamma$ or $y\in\Gamma$. Since $t\not\leq u$, $u\neq 1$. By the first variant of Lemma~\ref{lemma1}, without loss of generality there exists a set $\Gamma_1$ such that fulfills property 3) of Definition~\ref{def2}, the property $(\ast)$ from the first variant of Lemma~\ref{lemma1} and $\Gamma_1=\Gamma\cup\{z:\ x\leq z\}$. We will prove that $\Gamma_1\in M$. Suppose for the sake of contradiction that $u\in\Gamma_1$. Since $\Gamma_1$ fulfills property $(\ast)$, $u\in\Gamma_1$, $u=u$ (a presentation of $u$ as a finite join), we have $u\not\leq u$ - a contradiction. Consequently $u\notin\Gamma_1$. The rest conditions of the definition of $M$ can be verified easily. Consequently $\Gamma_1\in M$. Thus $\Gamma_1=\Gamma$ and $x\in\Gamma$. Consequently $\Gamma$ satisfies condition 5) of Definition~\ref{def2}. Thus $\Gamma$ is a clan. We have $\Gamma\in M$ and hence $t\in\Gamma$ and $u\notin\Gamma$. $\Box$
    \end{proof}
    
    \bigskip
    Now we can prove
    
    \begin{theorem}[Set-theoretical representation theorem of CJS]\label{th1CJS} 
    	Let $\underline{B}$ be a CJS. Then there is a nonempty set $W$ and an isomorphic embedding of $\underline{B}$ in the standard set-theoretical example of CJS of all subsets of $W$.
    \end{theorem}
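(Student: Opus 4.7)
The plan is to represent $\underline{B}$ by its clans. Set $W := Clans(\underline{B})$ and define $h\colon B\to 2^W$ by $h(a):=\{\Gamma\in W: a\in\Gamma\}$. The claim is that $h$ is an isomorphic embedding of $\underline{B}$ into the set-theoretical CJS on $W$ provided by Lemma~\ref{lemma5nn}.

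The routine compatibilities come straight from the clan axioms in Definition~\ref{def2}. Indeed, $h(0)=\emptyset$ by condition~2 and $h(1)=W$ by condition~1; the inclusion $h(a+b)\subseteq h(a)\cup h(b)$ is condition~5, while the reverse inclusion and the implication $a\leq b\Rightarrow h(a)\subseteq h(b)$ both follow from condition~3 applied to $a,b\leq a+b$; finally, $h(a)\cap h(b)\neq\emptyset\Rightarrow aCb$ is condition~4.

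The two substantive directions are precisely where Lemmas~\ref{lemma20} and~\ref{lemma2} do their work. If $aCb$, Lemma~\ref{lemma20} supplies a clan containing both $a$ and $b$, so $h(a)\cap h(b)\neq\emptyset$, showing that $h$ preserves $C$. If $a\not\leq b$, Lemma~\ref{lemma2} supplies a clan containing $a$ but not $b$, so $h(a)\not\subseteq h(b)$; combined with antisymmetry of $\leq$, this also delivers injectivity of $h$.

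It remains to note that $W$ is nonempty in the nontrivial case $0\neq 1$: by axiom~(13) we have $1C1$, and Lemma~\ref{lemma20} then produces at least one clan. Thus the real difficulty — the Zorn's-Lemma constructions of the required clans, together with the delicate use of the axiom schemas $A^1_{m,i}$ and $A_{n,i}$ that underlie them — has already been absorbed into Lemmas~\ref{lemma20} and~\ref{lemma2}, and the proof of the theorem itself is just the bookkeeping sketched above. Accordingly I expect no step here to present a genuine obstacle; the theorem is essentially a corollary of the two clan-existence lemmas proved earlier.
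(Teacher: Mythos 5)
Your proposal is correct and follows essentially the same route as the paper: the same clan space $W=Clans(\underline{B})$, the same map $h$, injectivity and order-reflection from Lemma~\ref{lemma2}, preservation of $C$ from Lemma~\ref{lemma20} and clan condition~4, and preservation of $+$ from conditions 3 and 5. The only (immaterial) difference is that you derive $\leq$-preservation directly from condition~3 and Lemma~\ref{lemma2}, whereas the paper routes it through the preservation of $+$ together with injectivity.
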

    \begin{proof}
    	Let $W=Clans(\underline{B})$. We define a function $h$ from $B$ to $2^W$ in the following way: $h(a)=\{\Gamma\in Clans(\underline{B}):\ a\in\Gamma\}$. We will prove that $h$ is an isomorphic embedding.
    	
    	We will show that $h$ is an injection. Let $a\neq b$. Suppose for the sake of contradiction that $a\leq b$ and $b\leq a$. By axiom (2), $a=b$ - a contradiction. Consequently $a\not\leq b$ or $b\not\leq a$. Without loss of generality $a\not\leq b$. By Lemma~\ref{lemma2}, there is a clan $\Gamma$ such that $a\in\Gamma$, $b\notin\Gamma$. Consequently $\Gamma\in h(a)$ and $\Gamma\notin h(b)$, i.e. $h(a)\neq h(b)$.
    	 
    	Clearly $h(0)=\emptyset$ and $h(1)=W$. 
    	
    	We will prove that $h$ preserves the operation $+$. By condition 5) from Definition~\ref{def2}, $h(a+b)\subseteq h(a)\cup h(b)$. Let $\Gamma\in h(a)\cup h(b)$. We will prove $\Gamma\in h(a+b)$. Without loss of generality $\Gamma\in h(a)$ and hence $\Gamma\in Clans(\underline{B})$, $a\in\Gamma$; so using condition 3) from Definition~\ref{def2} and $a\leq a+b$, we obtain that $a+b\in\Gamma$ and therefore $\Gamma\in h(a+b)$. Consequently $h(a+b)=h(a)\cup h(b)$. 
    	
    	We will prove that $h$ preserves the relation $\leq$. We have $a\leq b$ iff $a+b=b$, $h(a)\subseteq h(b)$ iff $h(a)\cup h(b)=h(b)$, $h$ preserves the operation $+$ and $h$ is an injection, so $h$ preserves the relation $\leq$.

    	We will prove that $h$ preserves the relation $C$. We have $h(a)C h(b)\Longleftrightarrow h(a)\cap h(b)\neq\emptyset$. We must prove $aCb$ iff $h(a)\cap h(b)\neq\emptyset$. By Lemma~\ref{lemma20}, $aCb$ implies $h(a)\cap h(b)\neq\emptyset$. Let $h(a)\cap h(b)\neq\emptyset$. Consequently there is $\Gamma\in h(a)$, $h(b)$ and therefore $\Gamma$ is a clan, $a\in\Gamma$ and $b\in\Gamma$. By condition 4) from Definition~\ref{def2}, $aCb$. 
    	
    	Thus $h$ is an isomorphic embedding. $\Box$
    	\end{proof}
    	
    	\begin{theorem}[Topological representation theorem of CJS]\label{th2CJS}
    		Let $\underline{B}$ be a CJS. Then there is a compact, semiregular, $T_0$ topological space $X$ and an isomorphic embedding $h$ of $\underline{B}$ in the topological contact algebra over $X$ (considered as a standard topological example of CJS). 
    	\end{theorem}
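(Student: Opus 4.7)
The plan is to endow the clan space $W=Clans(\underline{B})$ from Theorem~\ref{th1CJS} with a topology that makes each $h(a)$ clopen. Take $\tau$ to be the topology on $W$ generated by the subbase
\[
\mathcal{S}=\{h(a):a\in B\}\cup\{W\setminus h(a):a\in B\}.
\]
Each $h(a)$ is then clopen, so $h(a)=\overline{\mathrm{Int}(h(a))}$ and $h(a)\in RC(W,\tau)$. Finite intersections of subbase elements are again clopen, so the clopens form a base of $\tau$; since every clopen set is also regular open, the regular opens form a base and $(W,\tau)$ is semiregular. For distinct clans $\Gamma\neq\Delta$ some $a$ lies in one but not the other, and the disjoint opens $h(a)$ and $W\setminus h(a)$ separate them, so the space is $T_0$ (in fact $T_2$).

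The principal step is compactness, which I would prove via Alexander's subbase theorem. A subbase cover has the form $\{h(a_j)\}_{j\in J}\cup\{W\setminus h(b_k)\}_{k\in K}$, and failure of any finite subcover means that for every finite $J_0\subseteq J$ and $K_0\subseteq K$ some clan contains all $b_k$, $k\in K_0$, and avoids all $a_j$, $j\in J_0$. I would then construct a single clan realizing these conditions for the full $J,K$ by Zorn's lemma, applied to the family $\mathcal{M}$ of subsets $P\subseteq B$ satisfying $\{b_k\}_{k\in K}\subseteq P$, $\{a_j\}_{j\in J}\cap P=\emptyset$, upward closure, $0\notin P$, and the finite-choice condition: for every finite $\{x_1,\ldots,x_m\}\subseteq P$, every finite $J_0\subseteq J$, and every presentation $x_i=\sum_l x_i^l$, a choice $x_i^{l_i}$ exists from each join with $x_i^{l_i}\not\leq\sum_{j\in J_0}a_j$ and all chosen elements pairwise in contact. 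The sum $\sum_{j\in J_0}a_j$ plays the role of the single $u$ in axiom $A_{n,i}$, so the arguments of Lemma~\ref{lemma20} and Lemma~\ref{lemma2} transfer directly; finite satisfiability (via a clan witness) places $P_0=\{y:\exists k,b_k\leq y\}$ in $\mathcal{M}$, chain unions remain in $\mathcal{M}$, and a maximal element $\Gamma\in\mathcal{M}$ is shown to be a clan by running Lemma~\ref{lemma1}'s $x+y$-splitting with $u=\sum_{j\in J_0}a_j$. The existence of $\Gamma$ contradicts the assumed cover.

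Once compactness is secured, the theorem is complete: $h$ preserves $0,1,+,\leq,C$ by Theorem~\ref{th1CJS} (with $+$ realized as $\cup$ and $C$ as $\cap\neq\emptyset$ on the codomain), each $h(a)\in RC(W,\tau)$, and Lemma~\ref{lemma7nn} identifies the image as a standard topological CJS over $X=(W,\tau)$. The hard part is the Zorn extension used for compactness: the finite-choice condition must simultaneously exclude all $a_j$'s, and the key device is to collapse this into exclusion by the single element $\sum_{j\in J_0}a_j$ so that the single-$u$ axiom $A_{n,i}$ and the $x+y$-splitting of Lemma~\ref{lemma1} apply uniformly.
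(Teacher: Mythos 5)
Your proposal is correct in outline, but it takes a genuinely different route from the paper. The paper's proof is a two-step composition: it reuses the set-theoretical embedding $h_1$ of Theorem~\ref{th1CJS} into the relational contact algebra of \emph{all} subsets of $W=Clans(\underline{B})$ over $(W,=)$, which is a Boolean contact algebra, and then simply cites the known topological representation theorem for contact algebras (\cite{dv}, Theorem 5.1) to embed that algebra into $\underline{RC(X)}$ for a compact, semiregular, $T_0$ space $X$; the desired $h$ is the composite. You instead topologize $Clans(\underline{B})$ directly and prove compactness from scratch. Your route buys a self-contained argument, a smaller and more explicit carrier (the clans of $\underline{B}$ rather than the clans of $2^{W}$), and a stronger conclusion (a zero-dimensional $T_2$ space, not merely $T_0$); its cost is that the compactness step is not a corollary of Lemmas~\ref{lemma20} and~\ref{lemma2} but a genuine generalization of them, in which a whole family $\{a_j\}_{j\in J}$ must be excluded rather than a single $u$. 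Your key device does work: a clan avoiding each $a_j$ with $j\in J_0$ also avoids $u(J_0)=a_{j_1}+\ldots+a_{j_p}$ by clause 5) of Definition~\ref{def2}; a failure witness for the choice condition relative to $u(J_0^x)$ is also a failure witness relative to the larger $u(J_0^x\cup J_0^y)$ (since ``not $\leq u$'' becomes more restrictive as $u$ grows, the two witnesses merge and the splitting argument of Lemma~\ref{lemma1} runs with this single $u$); and $u(J_0)\neq 1$ follows from the assumed absence of a finite subcover. All of this, however, is the actual content of the proof and must be written out. Two small points to patch: when $K=\emptyset$ your seed $P_0$ is empty, so you should seed with $1$ as well to ensure the maximal element satisfies clause 1) of Definition~\ref{def2}; and in verifying $P_0\in\mathcal{M}$ you should say explicitly that the witness clan $\Delta$ avoids $u(J_0)$ itself (via clause 5)), since that is what forces the components chosen inside $\Delta$ to be $\not\leq u(J_0)$.
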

    	\begin{proof}
    		As in the proof of Theorem~\ref{th1CJS}, we see that there is a relational system $(W,=)$ and an isomorphic embedding $h_1$ of $\underline{B}$ in the relational contact algebra $\underline{B_1}$ of all subsets of $W$. It is shown in \cite{dv} (Theorem 5.1) that every contact algebra is isomorphically embedded in the topological contact algebra over some compact, semiregular, $T_0$ topological space. Therefore there is an embedding $h_2$ of $\underline{B_1}$ in the topological contact algebra over some compact, semiregular, $T_0$ topological space $X$. The desired embedding $h$ is $h_2\circ h_1$. $\Box$ 
    	\end{proof}

    	\section{Representation theorems of distributive contact join-semilattices}
    	
    	For proving a relational representation theorem of DCJS we will need the following definition
    	
    	\begin{definition}\label{def0n}
    		Let $\underline{B}$ be a DCJS. We define \textbf{abstract point} of $\underline{B}$ as a subset of $B$ $\Gamma$ such that:\\
    		1) $1\in\Gamma$;\\
    		2) $0\notin\Gamma$;\\
    		3) $x\in\Gamma$, $x\leq y\rightarrow y\in\Gamma$;\\
    		4) $x$, $y\in\Gamma\rightarrow$ there is a lower bound of $\{x,y\}$ $z$ such that $z\in\Gamma$;\\
    		5) $x+y\in\Gamma\rightarrow x\in\Gamma$ or $y\in\Gamma$.
    		
    		We denote by $AP(\underline{B})$ the set of all abstract points of $\underline{B}$.
    	\end{definition}
    	
    	We consider an arbitrary DCJS $\underline{B}$. We will prove several lemmas.
    	
    	\begin{lemma}\label{lemmastar}
    		Let $P$ be a prime ideal, $0\in P$, $1\notin P$. Then $U=B\setminus P$ is an abstract point.
    	\end{lemma}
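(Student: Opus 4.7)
The plan is to verify each of the five defining conditions of an abstract point (Definition~\ref{def0n}) by directly unpacking the given hypotheses together with the definitions of ideal (Definition~\ref{def2n}), dual ideal (Definition~\ref{def4n}), and prime ideal (Definition~\ref{def5n}). Since we are told $P$ is prime, by Definition~\ref{def5n} the complement $U = B \setminus P$ is automatically a dual ideal, and this observation does most of the work.

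First I would dispatch conditions 1) and 2) of Definition~\ref{def0n}: $1 \in U$ is just the restatement of the hypothesis $1 \notin P$, and $0 \notin U$ is the restatement of $0 \in P$. Conditions 3) and 4) then come for free from the definition of dual ideal: upward closure under $\leq$ is the first clause of Definition~\ref{def4n}, and the existence of a lower bound $z \in U$ for any two elements of $U$ is the second clause.

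The remaining condition 5), namely $x+y \in U \rightarrow x \in U$ or $y \in U$, I would prove by contraposition. If $x \notin U$ and $y \notin U$, then $x, y \in P$; since $P$ is an ideal, Definition~\ref{def2n} tells us $a+b \in P$ iff $a, b \in P$, hence $x+y \in P$, i.e.\ $x+y \notin U$. This is the only step that uses the ideal property of $P$ rather than the dual-ideal property of $U$, and even it is essentially immediate.

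There is no real obstacle here: the lemma is a definitional unpacking, and the only thing to be careful about is noticing that the four dual-ideal and ideal axioms line up one-for-one with four of the five abstract-point conditions, while the two endpoint conditions ($1 \in U$, $0 \notin U$) are exactly the two side-hypotheses of the lemma. No use of the contact relation $C$ or of axiom (ad) is needed in this proof.
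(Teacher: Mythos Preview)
Your proof is correct and follows essentially the same approach as the paper's own proof: invoke Definition~\ref{def5n} to get that $U$ is a dual ideal (handling conditions 3) and 4)), read off conditions 1) and 2) from the hypotheses $1\notin P$ and $0\in P$, and verify condition 5) by contraposition using the ideal property of $P$. The paper phrases the last step as a proof by contradiction rather than contraposition, but the content is identical.
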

    	\begin{proof}
    		By Definition~\ref{def5n}, $U$ is a dual ideal. Consequently $U$ satisfies conditions 3) and 4) of Definition~\ref{def0n}. Obviously $U$ fulfills conditions 1) and 2) of Definition~\ref{def0n}. Let $x+y\in U$. Suppose for the sake of contradiction that $x$, $y\notin U$. Consequently $x$, $y\in P$ but $P$ is a prime ideal, so $P$ is an ideal, so $x+y\in P$ - a contradiction. Consequently $x\in U$ or $y\in U$. Consequently $U$ satisfies condition 5) of Definition~\ref{def0n}. Thus $U$ is an abstract point. $\Box$ 
    	\end{proof}

    	\begin{lemma}\label{lemma2n}
    		Let $\Gamma$ be a clan and $a\in\Gamma$. Then there is an abstract point $U$ such that $a\in U$, $U\subseteq\Gamma$.
    	\end{lemma}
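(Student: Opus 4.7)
The plan is to construct $U$ as the complement of a prime ideal obtained by separating a suitable ideal from a suitable dual ideal via Lemma~\ref{lemma1n}.

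First I would observe that $I := B \setminus \Gamma$ is an ideal of $\underline{B}$ in the sense of Definition~\ref{def2n}. Indeed, condition 3) of the clan definition (upward closure of $\Gamma$) together with $x \leq x+y$ gives: if $x+y \notin \Gamma$ then $x, y \notin \Gamma$; conversely, condition 5) of the clan definition gives: if $x, y \notin \Gamma$ then $x+y \notin \Gamma$. Combining, $x+y \in I$ iff $x, y \in I$, as required.

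Next I would take $F := \{x \in B : a \leq x\}$, the principal filter generated by $a$. This is clearly a nonvoid dual ideal in the sense of Definition~\ref{def4n}: it is upward closed by transitivity of $\leq$, and any two of its elements have the common lower bound $a \in F$. Moreover $I \cap F = \emptyset$: if $x \in F$ then $a \leq x$, and since $a \in \Gamma$ and $\Gamma$ is upward closed, $x \in \Gamma$, so $x \notin I$.

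Now I would invoke Lemma~\ref{lemma1n} (applicable since $\underline{B}$ is a distributive join-semilattice) to obtain a prime ideal $P$ with $I \subseteq P$ and $P \cap F = \emptyset$. To apply Lemma~\ref{lemmastar}, I need $0 \in P$ and $1 \notin P$: by clan condition 2), $0 \notin \Gamma$, so $0 \in I \subseteq P$; and by axiom (8), $a \leq 1$, so $1 \in F$, whence $1 \notin P$. Therefore $U := B \setminus P$ is an abstract point.

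Finally I would verify $a \in U$ and $U \subseteq \Gamma$. Since $a \in F$ and $P \cap F = \emptyset$, we have $a \notin P$, i.e.\ $a \in U$. And $U = B \setminus P \subseteq B \setminus I = \Gamma$ because $I \subseteq P$. This is a short proof; the only nontrivial ingredient is recognizing that the clan axioms are precisely what make $B \setminus \Gamma$ an ideal, after which everything reduces to the standard prime-ideal separation lemma. There is no real obstacle once that observation is made.
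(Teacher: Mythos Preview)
Your proof is correct and follows essentially the same route as the paper: both take $I = B\setminus\Gamma$ as an ideal, $[a)$ (your $F$) as a dual ideal disjoint from it, invoke Lemma~\ref{lemma1n} to obtain a separating prime ideal $P$, and then apply Lemma~\ref{lemmastar} to conclude that $U = B\setminus P$ is an abstract point with the required properties. Your write-up is in fact slightly more explicit than the paper's in justifying why $I$ is an ideal via clan conditions 3) and 5).
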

    	\begin{proof}
    		We consider the set $[a)\stackrel{def}{=}\{x:\ a\leq x\}$. It can be easily verified that $[a)$ is a dual ideal and $[a)\subseteq\Gamma$. We denote $I=B\setminus\Gamma$. Since $[a)\subseteq\Gamma$, $[a)\cap I=\emptyset$. It can be easily verified that $I$ is an ideal. By Lemma~\ref{lemma1n}, there exists a prime ideal $P$ of $\underline{B}$ with $I\subseteq P$ and $P\cap [a)=\emptyset$. We denote $U=B\setminus P$. We have $P\cap [a)=\emptyset$, so $[a)\subseteq U$, so $1\in U$, so $1\notin P$. Since $\Gamma$ is a clan, $0\in I$, so $0\in P$. By Lemma~\ref{lemmastar}, $U$ is an abstract point. Clearly $a\in U$ and $U\subseteq\Gamma$. $\Box$
    	\end{proof}
    	
    	\begin{lemma}\label{lemma3n}
    		Let $\Gamma$ be a clan. Then there is a set of abstract points $\Sigma$ such that $\Gamma=\bigcup\Sigma$ and for any $U$, $V\in\Sigma$, $x\in U$ and $y\in V$ imply $xCy$. 
    	\end{lemma}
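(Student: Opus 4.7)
The plan is to build $\Sigma$ as a pointwise cover of $\Gamma$ by abstract points, using Lemma~\ref{lemma2n} as the workhorse. For each $a\in\Gamma$, Lemma~\ref{lemma2n} produces an abstract point $U_a$ with $a\in U_a$ and $U_a\subseteq\Gamma$. Define
\[
\Sigma=\{U_a:a\in\Gamma\}.
\]

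Verifying $\Gamma=\bigcup\Sigma$ is then immediate: the inclusion $\bigcup\Sigma\subseteq\Gamma$ holds because every $U_a$ is contained in $\Gamma$, and the reverse inclusion $\Gamma\subseteq\bigcup\Sigma$ holds because each $a\in\Gamma$ lies in $U_a\in\Sigma$.

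For the cross-contact property, fix $U,V\in\Sigma$ and pick $x\in U$, $y\in V$. Since $U\subseteq\Gamma$ and $V\subseteq\Gamma$, we have $x,y\in\Gamma$, and condition 4) of Definition~\ref{def2} (the clan axiom $x,y\in\Gamma\rightarrow xCy$) gives $xCy$ at once.

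In short, no separate construction or Zorn-style argument is needed here; the work has all been absorbed into Lemma~\ref{lemma2n}. The only thing to take care of is to check that the indexing ``one abstract point per element of $\Gamma$'' really covers $\Gamma$ and stays inside $\Gamma$, which is exactly what Lemma~\ref{lemma2n} guarantees. Thus I expect this lemma to be essentially a one-paragraph corollary of Lemma~\ref{lemma2n} together with clan condition 4). $\Box$
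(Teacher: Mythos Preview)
Your proof is correct and matches the paper's own argument essentially verbatim: the paper also sets $\Sigma=\{U_a:a\in\Gamma\}$ via Lemma~\ref{lemma2n}, checks $\Gamma=\bigcup\Sigma$ from $a\in U_a\subseteq\Gamma$, and derives $xCy$ from $U,V\subseteq\Gamma$ together with clan condition~4).
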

    	\begin{proof}
    		Let $a\in\Gamma$. By Lemma~\ref{lemma2n}, there is an abstract point $U_a$ such that $a\in U_a$, $U_a\subseteq\Gamma$. We denote $\Sigma=\{U_a:\ a\in\Gamma\}$. It can be easily verified that $\Gamma=\bigcup\Sigma$. Let $U$, $V\in\Sigma$. Let $x\in U$, $y\in V$. We must prove that $xCy$. Since $U$, $V\in\Sigma$, $U=U_b$ and $V=U_c$ for some $b$, $c\in\Gamma$ and moreover $U_b$, $U_c\subseteq\Gamma$. Consequently $x$, $y\in\Gamma$ but $\Gamma$ is a clan, so $xCy$. $\Box$
    	\end{proof}
    	
    	\begin{lemma}\label{lemma30n0}
    		Every two elements of an abstract point are in contact.
    	\end{lemma}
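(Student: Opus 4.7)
The plan is to exploit the filter-like condition 4) of Definition~\ref{def0n}: any two elements of an abstract point have a common lower bound that is itself in the abstract point. Once we have such a common lower bound $z \in U$ with $z \le x$ and $z \le y$, self-contact of $z$ (supplied by axiom (13)) propagates upward through axiom (12), giving $xCy$.

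More concretely, given $x, y \in U$, I would first apply condition 4) of Definition~\ref{def0n} to produce $z \in U$ with $z \le x$ and $z \le y$. Next I would use condition 2) of Definition~\ref{def0n} (namely $0 \notin U$) to conclude $z \ne 0$, so that axiom (13) yields $zCz$. Then axiom (12), applied to $zCz$ together with $z \le y$, gives $zCy$; swapping by axiom (10) and reapplying axiom (12) with $z \le x$ then gives $yCx$, and one more application of (10) delivers $xCy$.

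There is no real obstacle here; the content of the statement is essentially already packaged in Definition~\ref{def0n} plus the axioms (10), (12), (13), and the only small point worth double-checking is that the lower bound supplied by condition 4) need not be $x$ or $y$ themselves, so one really does need (12) to lift contact from $z$ up to the pair $(x,y)$ rather than trying to read the conclusion off directly.
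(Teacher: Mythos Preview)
Your proposal is correct and matches the paper's approach exactly: the paper's own proof is the one-line remark that the lemma ``can be easily proved using axioms (13), (12) and (10),'' and you have simply spelled out those details, together with the needed clauses 2) and 4) of Definition~\ref{def0n}.
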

    	\begin{proof}
    		The lemma can be easily proved using axioms (13), (12) and (10) from Definition~\ref{def0}. $\Box$
    	\end{proof}
    	
    	\begin{corollary}\label{lemma30n}
    		Every abstract point is a clan.
    	\end{corollary}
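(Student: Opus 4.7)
The plan is to unpack the two definitions side by side and observe that an abstract point automatically satisfies every clause required of a clan. Since Proposition~\ref{pr0nn} guarantees that the underlying DCJS $\underline{B}$ is also a CJS, the notion of clan makes sense on $\underline{B}$, and the comparison is legitimate.

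Fix an abstract point $\Gamma$. Conditions 1), 2), 3) and 5) of Definition~\ref{def2} are literally the same as conditions 1), 2), 3) and 5) of Definition~\ref{def0n}, so they hold immediately by hypothesis on $\Gamma$. The only remaining obligation is condition 4) of Definition~\ref{def2}, namely that $x,y\in\Gamma$ implies $xCy$. But this is precisely the statement of the preceding Lemma~\ref{lemma30n0}. So the verification reduces to a one-line citation of that lemma.

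There is no real obstacle here: all the work has been done in Lemma~\ref{lemma30n0}, whose proof used axioms (13), (12), (10) via the lower-bound condition 4) of Definition~\ref{def0n} (pick a common lower bound $z \in \Gamma$; then $z \neq 0$ by condition 2), so $zCz$ by (13), hence $xCy$ by two applications of (12) together with (10)). Given that lemma, the corollary is simply the observation that Definition~\ref{def0n} is pointwise at least as strong as Definition~\ref{def2}, so I would present it as a two-sentence proof.
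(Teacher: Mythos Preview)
Your proposal is correct and matches the paper's approach exactly: the paper states this as an immediate corollary of Lemma~\ref{lemma30n0} with no further argument, and your unpacking of the five conditions (with the observation that 1), 2), 3), 5) coincide verbatim and 4) is supplied by the preceding lemma) is precisely the intended justification.
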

    	
    	\begin{lemma}\label{lemma4n}
    		Let $t\not\leq u$. Then there is an abstract point $U$ such that $t\in U$, $u\notin U$. 
    	\end{lemma}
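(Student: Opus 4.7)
The plan is to obtain the desired abstract point by composing two results already established in the excerpt: Lemma~\ref{lemma2}, which produces a separating clan in any CJS, and Lemma~\ref{lemma2n}, which refines any clan to an abstract point containing a prescribed element while staying inside the clan.

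First I would invoke Proposition~\ref{pr0nn} to note that the DCJS $\underline{B}$ is in particular a CJS, so Lemma~\ref{lemma2} applies. Given the hypothesis $t\not\leq u$, Lemma~\ref{lemma2} furnishes a clan $\Gamma$ with $t\in\Gamma$ and $u\notin\Gamma$. Next, since $t\in\Gamma$, I would apply Lemma~\ref{lemma2n} with $a=t$ to produce an abstract point $U$ satisfying $t\in U$ and $U\subseteq\Gamma$. Finally, because $U\subseteq\Gamma$ and $u\notin\Gamma$, the containment forces $u\notin U$, giving the desired abstract point.

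There is really no technical obstacle here: all of the delicate Zorn-style construction has been absorbed into the two preceding lemmas, and what remains is a one-line combination. If anything, the only point worth checking carefully is that Lemma~\ref{lemma2} is applicable in the DCJS setting, which is exactly what Proposition~\ref{pr0nn} guarantees, and that the inclusion $U\subseteq\Gamma$ from Lemma~\ref{lemma2n} is precisely what transports the separation property $u\notin\Gamma$ down to $U$.
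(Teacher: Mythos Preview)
Your proposal is correct and follows essentially the same route as the paper's own proof: invoke the fact that a DCJS is a CJS, apply Lemma~\ref{lemma2} to obtain a separating clan $\Gamma$, then apply Lemma~\ref{lemma2n} to extract an abstract point $U\subseteq\Gamma$ containing $t$, and conclude $u\notin U$ from the inclusion. The only cosmetic difference is that you cite Proposition~\ref{pr0nn} explicitly, whereas the paper simply asserts that a DCJS is a CJS.
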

    	\begin{proof}
    		Since $\underline{B}$ is a DCJS, $\underline{B}$ is a CJS and we can apply Lemma~\ref{lemma2}. Thus there is a clan $\Gamma$ such that $t\in\Gamma$, $u\notin\Gamma$. By Lemma~\ref{lemma2n}, there is an abstract point $U$ such that $t\in U$, $U\subseteq\Gamma$. Obviously $u\notin U$. $\Box$
    	\end{proof}
    	
    	\bigskip
    	Now we can prove
    	
    	\begin{theorem}[Relational representation theorem of DCJS]\label{th1n}
    		Let $\underline{B}$ be a DCJS. Then there is a relational structure $(W,R)$ with a reflexive and symmetric relation $R$ and an isomorphic embedding of $\underline{B}$ in the relational contact algebra of all subsets of $W$ (considered as the standard relational example of DCJS of all subsets of $W$). 	
    	\end{theorem}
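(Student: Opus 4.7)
The plan is to adapt the clan-based set-theoretical representation of Theorem~\ref{th1CJS} to a relational representation using \emph{abstract points} as worlds. Concretely, I would take $W=AP(\underline{B})$ and define a binary relation $R$ on $W$ by
\[
U\,R\,V \iff (\forall x\in U)(\forall y\in V)\,xCy.
\]
Reflexivity of $R$ is Lemma~\ref{lemma30n0}; symmetry is immediate from axiom (10). The candidate embedding is the obvious one, $h(a)=\{U\in AP(\underline{B}):a\in U\}$, taking values in $2^W$, which is the relational contact algebra over $(W,R)$.

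I would then verify the algebraic properties one by one, all of which follow readily from the definition of abstract point together with the earlier lemmas. Injectivity of $h$ reduces to the case $a\not\leq b$, which is handled by Lemma~\ref{lemma4n}. Clearly $h(0)=\emptyset$ (by condition 2 of Definition~\ref{def0n}) and $h(1)=W$ (by condition 1). Preservation of $+$ is the same argument as in Theorem~\ref{th1CJS}: the inclusion $h(a+b)\subseteq h(a)\cup h(b)$ is condition 5 of Definition~\ref{def0n}, and the converse inclusion comes from condition 3 applied to $a\leq a+b$ and $b\leq a+b$. Preservation of $\leq$ then follows formally from $a\leq b\Leftrightarrow a+b=b$ and the injectivity of $h$.

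The only nontrivial step is preservation of $C$, i.e.\ showing $aCb\Leftrightarrow h(a)\,C_R\,h(b)$. The backward direction is trivial: if $U\in h(a)$, $V\in h(b)$ and $U\,R\,V$, then by definition of $R$ we get $aCb$ directly. For the forward direction, assume $aCb$. Since every DCJS is a CJS by Proposition~\ref{pr0nn}, Lemma~\ref{lemma20} provides a clan $\Gamma$ containing both $a$ and $b$. Applying Lemma~\ref{lemma2n} twice, I obtain abstract points $U,V\subseteq\Gamma$ with $a\in U$ and $b\in V$. For any $x\in U$ and $y\in V$ we have $x,y\in\Gamma$, and condition 4 of the clan definition yields $xCy$; hence $U\,R\,V$, giving $h(a)\,C_R\,h(b)$.

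The main obstacle I anticipate is simply organising the interplay between clans and abstract points: the relation $R$ is defined via the \emph{universal} condition ``every pair in contact,'' and one must know that such pairs of abstract points actually witness each contact $aCb$. This is exactly what the combination of Lemma~\ref{lemma20} (existence of a clan containing any contact pair) and Lemma~\ref{lemma2n} (every element of a clan lies in an abstract point inside that clan) delivers, so once this bridge is in place the proof is a routine assembly of the lemmas from the preceding section.
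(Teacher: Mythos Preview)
Your proposal is correct and follows essentially the same route as the paper's proof: same $W=AP(\underline{B})$, same $R$, same $h$, with injectivity via Lemma~\ref{lemma4n} and the forward direction of $C$-preservation via a clan from Lemma~\ref{lemma20} refined to abstract points inside it. The only cosmetic difference is that the paper packages your double application of Lemma~\ref{lemma2n} into Lemma~\ref{lemma3n} (and cites Corollary~\ref{lemma30n} rather than Lemma~\ref{lemma30n0} for reflexivity), but the underlying argument is identical.
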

    	\begin{proof}
    		Let $W=AP(\underline{B})$. We define $R$ in the following way: $URV$ iff $(\forall a\in U)(\forall b\in V)(aCb)$. 
    		
    		By Corollary~\ref{lemma30n}, $R$ is reflexive. Obviously $R$ is symmetric. We define a function $h$ from $B$ to $2^W$ in the following way: $h(a)=\{U\in AP(\underline{B}):\ a\in U\}$. We will prove that $h$ is an isomorphic embedding.
    		
    		Using Lemma~\ref{lemma4n}, we prove that $h$ is an injection.
    		
    		Clearly $h(0)=\emptyset$ and $h(1)=W$.
    		
    		Similarly as in Theorem~\ref{th1CJS} we prove that $h$ preserves the operation $+$ and the relation $\leq$. 
    		
    		We will prove that $h$ preserves the relation $C$. Let $a$, $b\in B$. We have $h(a)Ch(b)$ iff there are $U\in h(a)$, $V\in h(b)$ such that $(\forall x\in U)(\forall y\in V)(xCy)$. Clearly $h(a)Ch(b)$ implies $aCb$. Now let $aCb$. Since $\underline{B}$ is also a CJS, using Lemma~\ref{lemma20}, we obtain that there is a clan $\Gamma$ such that $a$, $b\in\Gamma$. By Lemma~\ref{lemma3n} we see that $h(a)Ch(b)$. Consequently $h$ preserves the relation $C$.
    		
    		Thus $h$ is an isomorphic embedding. $\Box$
    	\end{proof}
    	
    	\begin{theorem}[Topological representation theorem of DCJS]\label{th2n}
    		Let $\underline{B}$ be a DCJS. Then there is a compact, semiregular, $T_0$ topological space $X$ and an isomorphic embedding $h$ of $\underline{B}$ in the topological contact algebra over $X$ (considered as a standard topological example of DCJS).
    	\end{theorem}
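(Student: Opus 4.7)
The plan is to mimic the proof of Theorem~\ref{th2CJS}, but starting from the relational representation Theorem~\ref{th1n} (just proved) instead of from the set-theoretical one, and composing with the classical topological representation of contact algebras from \cite{dv}.

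First I would apply Theorem~\ref{th1n} to $\underline{B}$ to obtain a relational structure $(W,R)$ with $R$ reflexive and symmetric, together with an isomorphic embedding $h_1 : \underline{B} \to \underline{B_1}$, where $\underline{B_1}$ is the relational contact algebra of all subsets of $W$. Observe that $\underline{B_1}$ is a full Boolean contact algebra in the classical sense. I would then invoke \cite{dv} (Theorem~5.1), which asserts that every contact algebra embeds into the topological contact algebra over some compact, semiregular, $T_0$ space; this produces a space $X$ of the required type together with an isomorphic embedding $h_2 : \underline{B_1} \to \underline{RC(X)}$ of contact algebras. The desired map is the composition $h := h_2 \circ h_1$.

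It remains to check that $h$ is an isomorphic embedding into a standard topological example of DCJS in the sense of Lemma~\ref{lemma8nn}. Since $h_1$ and $h_2$ preserve $0$, $1$, $\leq$, $+$ and $C$, so does $h$; thus $h(B)$ is a family of regular closed subsets of $X$ containing $\emptyset = h(0)$ and $X = h(1)$ and closed under $\cup$, with $\leq$ on $h(B)$ coinciding with $\subseteq$ and, because $h_2$ lands in $\underline{RC(X)}$ (where contact is non-empty intersection), with $C$ on $h(B)$ given by non-empty intersection in $X$. The induced substructure is isomorphic to $\underline{B}$, so the axiom (ad) is automatically inherited, and Lemma~\ref{lemma8nn} then certifies that the image is a standard topological example of DCJS.

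I do not expect any genuine obstacle: the whole argument is the composition of two previously established embeddings. The only point deserving a line of justification is the agreement of the two notions of contact---the relational one on $\underline{B_1}$ and the non-empty-intersection one on $\underline{RC(X)}$---under $h_2$, which is guaranteed precisely because $h_2$ is an isomorphic embedding of contact algebras.
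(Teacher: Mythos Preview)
Your proposal is correct and matches the paper's approach: the paper simply says the proof is ``similar to the proof of Theorem~\ref{th2CJS}'', and your composition of the relational representation Theorem~\ref{th1n} with the topological embedding from \cite{dv} is exactly the intended analog. Your final paragraph about verifying (ad) on the image via Lemma~\ref{lemma8nn} is harmless but unnecessary, since the target $\underline{RC(X)}$ is already a DCJS by Proposition~\ref{pr1nn}, which is what the parenthetical in the theorem statement refers to.
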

    	\begin{proof}
    		The proof is similar of the proof of Theorem~\ref{th2CJS}. $\Box$
    	\end{proof}
    	
    	\begin{remark}
	    	It is possible to prove Theorem~\ref{th1n} (also Lemma~\ref{lemma4n}) without using of clans.
    	\end{remark}
   
    	\textbf{Second proof of Lemma~\ref{lemma4n}}. We consider $[t)=\{x\in B:\ t\leq x\}$ and $(u]=\{x\in B:\ x\leq u\}$. It can be easily verified that $(u]$ is an ideal and that $[t)$ is a dual ideal. Suppose for the sake of contradiction that $(u]\cap [t)\neq\emptyset$, i.e. there is $x\in(u]\cap [t)$. We have $t\leq x\leq u$ and hence $t\leq u$ - a contradiction. Consequently $(u]\cap [t)=\emptyset$. By Lemma~\ref{lemma1n}, there exists a prime ideal $P$ of $\underline{B}$ with $(u]\subseteq P$ and $P\cap [t)=\emptyset$. Using Lemma~\ref{lemmastar}, we obtain that $U=B\setminus P$ is an abstract point of $\underline{B}$. Clearly $t\in U$ and $u\notin U$. $\Box$
    		
    	\bigskip
    	\textbf{Second proof of Theorem~\ref{th1n}}. The proof is the same as before with two differences. 
    	
    	For proving the reflexivity of $R$ we use Lemma~\ref{lemma30n0}.
    	
    	We prove that $aCb$ implies $h(a)Ch(b)$ in a similar way as in \cite{iv} (Lemma 3.8 (i)). Let $aCb$. We consider $P=\{x:\ x\overline{C}b\}$. We will prove that $P$ is an ideal. It suffices to show that $x+y\in P$ iff $x$, $y\in P$. Let $x+y\in P$. Consequently $(x+y)\overline{C}b$. Suppose for the sake of contradiction that $x\notin P$ or $y\notin P$. Without loss of generality $x\notin P$ and hence $xCb$; so $(x+y)Cb$ - a contradiction. Consequently $x$, $y\in P$. Now let $x$, $y\in P$ and suppose for the sake of contradiction that $x+y\notin P$. Consequently $(x+y)Cb$ and hence $bCx$ or $bCy$; so $x\notin P$ or $y\notin P$ - a contradiction. Consequently $x+y\in P$. Thus $P$ is an ideal. 
    	
    	We have also that $[a)$ is a dual ideal and $[a)\cap P=\emptyset$; so by Lemma~\ref{lemma1n}, there exists a prime ideal $P'$ of $\underline{B}$ with $P\subseteq P'$ and $P'\cap [a)=\emptyset$. By Lemma~\ref{lemmastar}, $F=B\setminus P'$ is an abstract point. 
    	
    	We consider $I=\{x:\ (\exists y\in F)(x\overline{C}y)\}$. We will prove that $I$ is an ideal. Let $x$, $y\in B$. It can be easily seen that $x+y\in I$ implies $x$, $y\in I$. Now let $x$, $y\in I$. We will prove $x+y\in I$. We have that $(\exists z_1\in F)(x\overline{C}z_1)$ and $(\exists z_2\in F)(y\overline{C}z_2)$. Since $z_1$, $z_2\in F$ and $F$ is an abstract point, there is a lower bound of $\{z_1,z_2\}$ $z$ such that $z\in F$. Suppose for the sake of contradiction that $(x+y)Cz$. Consequently $zCx$ or $zCy$. Without loss of generality $zCx$ but $z\leq z_1$; so $xCz_1$ - a contradiction. Consequently $(x+y)\overline{C}z$ and hence $x+y\in I$. Consequently $I$ is an ideal. Suppose for the sake of contradiction that there is $x\in [b)\cap I$. We have $(\exists y\in F)(b\leq x\overline{C}y)$. Since $y\in F$, $y\notin P$; so $yCb$; so $yCx$ - a contradiction. Consequently $[b)\cap I=\emptyset$. We have also that $[b)$ is a dual ideal, $I$ is an ideal; so by Lemma~\ref{lemma1n}, there is a prime ideal $I'$ with $I\subseteq I'$ and $I'\cap [b)=\emptyset$. By Lemma~\ref{lemmastar}, $F_1=B\setminus I'$ is an abstract point.
    	
    	It remains to prove that there are $U\in h(a)$, $V\in h(b)$ such that $(\forall x\in U)(\forall y\in V)(xCy)$. Clearly $F\in h(a)$ and $F_1\in h(b)$. Let $x\in F$, $y\in F_1$. Suppose for the sake of contradiction that $y\overline{C}x$. Consequently $y\in I$ and hence $y\in I'$; so $y\notin F_1$ - a contradiction. Consequently $yCx$. Thus $h(a)Ch(b)$. $\Box$

    	\section{A quantifier-free logic}
    	
    	We consider a quantifier-free language $\mathcal{L}$ which has
    	\begin{itemize}
    		\item constants: $0$, $1$;
    		\item functional symbols: $+$;
    		\item predicate symbols: $\leq$, $C$.
    	\end{itemize}

        We consider a quantifier-free logic $L$ which has axioms these of CJS and an only rule of inference - modus ponens. 
    	
    	\begin{theorem}[Completeness theorem]
    		Let $\varphi$ be a formula in $\mathcal{L}$. Then the following conditions are equivalent:\\
    		1) $\varphi$ is a theorem of $L$;\\
    		2) $\varphi$ is true in all topological contact algebras;\\
    		3) $\varphi$ is true in all DCJS;\\
    		4) $\varphi$ is true in all CJS;\\
    		5) $\varphi$ is true in all finite CJS with number of the elements $\leq 2^n+1$, where $n$ is the number of the variables of $\varphi$. 
    	\end{theorem}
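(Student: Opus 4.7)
The plan is to close the cycle $1\Rightarrow 4\Rightarrow 3\Rightarrow 2\Rightarrow 4\Rightarrow 1$ and then prove the finite model property $4\Leftrightarrow 5$ separately. Soundness $1\Rightarrow 4$ is immediate: the axioms of $L$ are by definition the axioms of CJS, so they hold in every CJS, and modus ponens preserves truth. The implication $4\Rightarrow 3$ is Proposition~\ref{pr0nn}, and $3\Rightarrow 2$ follows from Proposition~\ref{pr1nn} together with the fact that the topological contact algebra over a space $X$ is a contact algebra. For $2\Rightarrow 4$, I take any CJS $\underline{B}$ with any valuation $v$, invoke Theorem~\ref{th2CJS} to obtain an isomorphic embedding $h$ of $\underline{B}$ into $\underline{RC(X)}$ for some compact semiregular $T_0$ space $X$, and observe that since $\varphi$ is quantifier-free and the symbols $0$, $1$, $+$, $\leq$, $C$ are all preserved and reflected by $h$, the truth value of $\varphi$ under $v$ in $\underline{B}$ equals its truth value under $h\circ v$ in $\underline{RC(X)}$, which is true by hypothesis.

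For $4\Leftrightarrow 5$, only $5\Rightarrow 4$ requires work. Suppose $\varphi(x_1,\ldots,x_n)$ fails in some CJS $\underline{B}$ under a valuation $v$, and let $\underline{B'}$ be the substructure of $\underline{B}$ in the CJS signature generated by $v(x_1),\ldots,v(x_n)$. Every CJS axiom is universal---including the disjunctive schemes $A^1_{m,i}$ and $A_{n,i}$, whose disjunctive conclusions refer only to terms already named in the premise---so $\underline{B'}$ inherits them and is itself a CJS. Every element of $B'$ is either $0$, $1$, or a non-empty join of the $n$ generators, hence $|B'|\leq 2^n+1$, and $\varphi$ still fails at $v$ in $\underline{B'}$ by quantifier-freeness, contradicting $(5)$.

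The main obstacle is the completeness step $4\Rightarrow 1$, which I would handle by contraposition via a Lindenbaum-style construction. If $\varphi$ is not a theorem of $L$, then, viewing the atomic formulas of $\mathcal{L}$ as propositional letters---the level at which modus ponens acts in a quantifier-free calculus---the set consisting of all substitution instances of the CJS axioms together with $\neg\varphi$ is propositionally consistent, hence admits a Boolean valuation $\nu$ satisfying all of them. Build the term model $\underline{M}$ whose domain is the set of $\mathcal{L}$-terms modulo the equivalence $s\equiv t$ iff $\nu(s\leq t)\wedge\nu(t\leq s)$, with $0$, $1$, $+$ interpreted term-wise and $\leq$, $C$ read off from $\nu$. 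Well-definedness of $+$, $\leq$ and $C$ on equivalence classes follows from axioms (2), (10), (12) and from the compatibility of $+$ with $\leq$, and every CJS axiom holds in $\underline{M}$ because each of its substitution instances lies in the set satisfied by $\nu$; hence $\underline{M}$ is a CJS in which $\varphi$ fails at the canonical assignment, contradicting $(4)$. The genuinely delicate verification is that the disjunctive schemes $A^1_{m,i}$ and $A_{n,i}$ survive the quotient; the remaining implications are essentially bookkeeping around the earlier representation and embedding results.
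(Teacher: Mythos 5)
Your proposal is correct, and for the semantic core of the theorem it follows the paper's strategy: the equivalence of 2), 3), 4) rests on the representation and embedding machinery, and $5\Rightarrow 4$ is the same generated-substructure argument with the count $|B'|\leq 2^n+1$. You differ from the paper in two respects. First, you orient the chain as $4\Rightarrow 3\Rightarrow 2$ via the class inclusions (Proposition~\ref{pr0nn}: every DCJS is a CJS; Proposition~\ref{pr1nn}: every topological contact algebra is a DCJS), so that only one hard step $2\Rightarrow 4$ remains, handled by Theorem~\ref{th2CJS}; the paper instead proves $2\Rightarrow 3\Rightarrow 4$ and therefore invokes both topological representation theorems (Theorem~\ref{th2n} for $2\Rightarrow 3$ and Theorem~\ref{th2CJS} for $3\Rightarrow 4$). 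Your arrangement is slightly more economical. Second, and more substantially, the paper disposes of condition 1) by declaring $T\vdash\varphi$ equivalent to $T\models\varphi$ ``by the well known Completeness theorem'' and then closing the cycle through $T\models\varphi$, whereas you prove soundness directly and carry out an explicit Lindenbaum/term-model construction for $4\Rightarrow 1$; that construction is precisely the content hidden in the paper's citation, so you are supplying a proof of a fact the paper treats as standard rather than diverging mathematically. One remark on your own caveat: the survival of the schemes $A^1_{m,i}$ and $A_{n,i}$ under the quotient is not the delicate point --- every element of the term model is named by a term, every substitution instance at terms is satisfied by $\nu$, and the atomic relations are well defined on classes using instances of (3) for $\leq$ and of (10), (12) for $C$ --- the genuinely implicit assumptions are that $L$ contains the propositional tautologies (otherwise modus ponens alone derives nothing and your step from propositional inconsistency to derivability of $\varphi$ fails) and congruence axioms for $=$ (so that $\nu(s=t)$ agrees with your relation $s\equiv t$). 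The paper makes the same tacit assumptions when it appeals to the completeness theorem, so this does not count against your argument, but it is where the real bookkeeping lies.
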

    	\begin{proof}
    		Let $T$ be the set of the axioms of $L$. Condition 1) is equivalent to 1') $T\vdash\varphi$. By the well known Completeness theorem, Condition 1') is equivalent to 1'') $T\models\varphi$.
    		
    		1'')$\rightarrow$ 2) It can be easily verified.
    		
    		2)$\rightarrow$ 3) Let $\mathcal{A}$ be a DCJS and $v$ be a valuation in $\mathcal{A}$. We will prove that $(\mathcal{A},v)\models\varphi$. By Theorem~\ref{th2n}, there is a topological space $X$ and an isomorphic embedding $h$ of $\mathcal{A}$ in $\underline{RC(X)}$. Let the variables of $\varphi$ be $p_1,\ldots,p_n$, $n\geq 0$. We define a valuation $v_1$ in $\underline{RC(X)}$ in the following way:
    		\[v_1(p) =\left\{
    		\begin{array}{lll}
    		h(v(p)) & \quad &\textmd{if }p=p_1\textmd{ or }p=p_2\textmd{ or }\ldots\textmd{ or }p=p_n\\
    		\emptyset & \quad &\textmd{otherwise}
    		\end{array} \right.\]
    		Clearly $(\mathcal{A},v)\models\varphi$ iff $(\underline{RC(X)},v_1)\models\varphi$. Using 2), we get that $(\mathcal{A},v)\models\varphi$. 
    		
    		3)$\rightarrow$ 4) Let $\mathcal{A}$ be a CJS and $v$ be a valuation in $\mathcal{A}$. We will prove that $(\mathcal{A},v)\models\varphi$. By Theorem~\ref{th2CJS}, there is a topological space $X$ and an isomorphic embedding $h$ of $\mathcal{A}$ in $\underline{RC(X)}$. We define a valuation $v_1$ in $\underline{RC(X)}$ as above and we have $(\mathcal{A},v)\models\varphi$ iff $(\underline{RC(X)},v_1)\models\varphi$. By Proposition~\ref{pr1nn}, $\underline{RC(X)}$ is a DCJS and by 3), $(\underline{RC(X)},v_1)\models\varphi$. 
    		
    		4)$\rightarrow$ 5) Obviously.
    		
    		5)$\rightarrow$ 1'') Let $\mathcal{A}\models T$, i.e. $\mathcal{A}$ is a CJS. Let $v$ be a valuation in $\mathcal{A}$. We will prove that $(\mathcal{A},v)\models\varphi$. Let the variables of $\varphi$ be $p_1,\ldots,p_n$. We consider the set $S=\{v(p_{i_1})+\ldots+v(p_{i_m}):\ i_1<\ldots<i_m\leq n,\ m\geq 1\}\cup\{0,1\}$. Clearly $|S|\leq 2^n+1$. The structure $\mathcal{S}$ with universe $S$ is a substructure of $\mathcal{A}$ and since $\mathcal{A}$ is a CJS and the axioms of CJS can be considered as universal formulas, $\mathcal{S}$ is a CJS. We define a valuation $v_1$ in $\mathcal{S}$ in the following way:
    		\[v_1(p) =\left\{
    		\begin{array}{lll}
    		v(p) & \quad &\textmd{if }p=p_1\textmd{ or }p=p_2\textmd{ or }\ldots\textmd{ or }p=p_n\\
    		0 & \quad &\textmd{otherwise}
    		\end{array} \right.\]
    		By 5), $(\mathcal{S},v_1)\models\varphi$ and hence $(\mathcal{A},v)\models\varphi$. $\Box$
    	\end{proof}
    	
    	\begin{corollary}
    		$L$ is decidable.
    	\end{corollary}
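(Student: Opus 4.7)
The plan is to derive decidability directly from the completeness theorem just established, by exhibiting an effective procedure that, given a formula $\varphi$ in $\mathcal{L}$, decides whether $\varphi$ is a theorem of $L$. The strategy is to use the equivalence of condition 1) with condition 5): $\varphi$ is a theorem of $L$ iff $\varphi$ is true in every CJS with at most $2^n + 1$ elements, where $n$ is the number of variables of $\varphi$. Since $n$ is read off from $\varphi$ and the size bound $2^n+1$ is a computable function of $n$, it suffices to argue that one can effectively enumerate the finite structures of that size in the signature $(\leq, 0, 1, +, C)$, effectively check whether each such structure is a CJS, and effectively evaluate $\varphi$ under each of the finitely many valuations.

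First I would observe that, for a fixed cardinality bound $N = 2^n+1$, there are only finitely many structures on the underlying set $\{0,1,\ldots,N-1\}$ in the signature of CJS, since $+$ has finitely many possible graphs and $\leq$, $C$ are binary relations on a finite set. All of these can be enumerated mechanically. For each candidate structure, the axioms (1)--(10) are universal first-order sentences and can be checked by brute force. The only subtlety is that the CJS axioms include the infinite schemas (14) and (15), i.e.\ $A^1_{m,i}$ and $A_{n,i}$ for all $m,i\geq 1$ and all $n,i\geq 1$. However, in a structure of size $N$ any instance of such a scheme involves finitely many elements; and since $s_1,\ldots,s_m$ (respectively $x_1,\ldots,x_n$) and the summands $s_j^{l}$ (respectively $x_k^{l}$) all range over the $N$-element carrier, only the instances with $m,n,i \leq N$ can be nontrivial (larger instances collapse by pigeonhole to smaller ones that have already been verified). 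Hence verifying the axiom schemas on a fixed finite structure reduces to checking finitely many instances, and is effectively decidable.

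Once each candidate structure has been accepted or rejected as a CJS, I would run $\varphi$ through the remaining accepted structures: for each such $\mathcal{S}$, enumerate the finitely many valuations $v: \{p_1,\ldots,p_n\} \to S$ and compute the truth value of $\varphi$ under $(\mathcal{S},v)$ by induction on the structure of the quantifier-free formula $\varphi$, which is trivially decidable on a finite structure. Declare $\varphi$ a theorem iff $(\mathcal{S},v)\models\varphi$ in every case. By condition $1) \Leftrightarrow 5)$ of the completeness theorem, the procedure is correct.

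The main (only) obstacle is the point addressed above: justifying that the infinite axiom schemas (14) and (15) do not destroy decidability. The key observation is that in a structure of bounded size $N$ only instances with all parameters bounded by $N$ need to be inspected, so the acceptance check for CJS-hood on each finite candidate structure remains a finite computation, and the whole procedure terminates. $\Box$
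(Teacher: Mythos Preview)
The paper gives no explicit proof of this corollary, treating it as immediate from the equivalence $1)\Leftrightarrow 5)$ in the Completeness Theorem. Your proof makes the argument explicit and, to your credit, addresses a point the paper leaves entirely implicit: that CJS-hood of a finite structure is decidable despite the infinite axiom schemas (14) and (15).

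Your pigeonhole reduction is correct in spirit, but the bound $m,n\leq N$ is too optimistic. Reducing $i$ to at most $N$ works exactly as you say: in a structure of size $N$ each presentation $s_j=s_j^1+\ldots+s_j^i$ has at most $N$ distinct summands, and repetitions are absorbed by idempotence of $+$, so any instance with $i>N$ reduces to one with $i=N$. For $m$ and $n$, however, two indices $j,j'$ can be collapsed only when the \emph{entire presentation tuples} $(s_j^1,\ldots,s_j^i)$ and $(s_{j'}^1,\ldots,s_{j'}^i)$ coincide, not merely when $s_j=s_{j'}$: distinct presentations of the same element impose genuinely different selection constraints in the conclusion, and a good choice for one need not transfer to the other. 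Hence, once $i\leq N$, the bound on $m$ and $n$ is of order $N^N$ (the number of tuples in $B^i$), not $N$. This does not affect the decidability conclusion---the bound is still computable from $N$---so your argument goes through with this adjustment.
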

    	
    	\section{Conclusion}
    	
    	Some possible future research directions are for example:
    	
    	\begin{itemize}
    	\item the complexity of the considered logic;
    	
    	\item is the theory of CJS finitely axiomatizable or not; is it possible axioms $A^1_{m,i}$ and $A_{n,i}$ to be simplified;
    	
    	\item to be obtained representations in $T_{1}$ and $T_{2}$ topological spaces by considering axiomatic extensions of CJS and DCJS;
    	
    	\item the language to be extended by considering as nondefinable primitives of the relations non-tangential inclusion and dual contact.
    	\end{itemize}
    	
    	\bigskip
    	\noindent{\bf Acknowledgements.} This paper is supported by National program "Young scientists and Postdoctoral candidates" 2020 of Ministry of Education and Science of Bulgaria.

\end{document}